\begin{document}
\newtheorem{theorem}{\indent Theorem}[section]
\newtheorem{proposition}[theorem]{\indent Proposition}
\newtheorem{definition}[theorem]{\indent Definition}
\newtheorem{lemma}[theorem]{\indent Lemma}
\newtheorem{remark}[theorem]{\indent Remark}
\newtheorem{corollary}[theorem]{\indent Corollary}

%%%%%%%%%%
\begin{center}
    {\large \bf Optimal bilinear control of nonlinear Schr\"{o}dinger equations with singular potentials}
\vspace{0.5cm}\\{Binhua Feng$^{*}$, Dun Zhao, Pengyu Chen}\\
{\small School of Mathematics and Statistics, Lanzhou University\\
Lanzhou, 730000, P.R. China }\\
\end{center}

\renewcommand{\theequation}{\arabic{section}.\arabic{equation}}
\numberwithin{equation}{section}
\footnote[0]{\hspace*{-7.4mm}
%%%%%%%%%%
E-mail: binhuaf@163.com(Binhua Feng)\\
$^*$Corresponding author\\
This work is supported by the Program for the Fundamental Research
Funds for the Central Universities, NSFC Grants 11031003 and
11171028, and the Program for NCET. }

\renewcommand{\baselinestretch}{1.3}
\large\normalsize
\begin{abstract}
In this paper, we consider an optimal bilinear control problem for
the nonlinear Schr\"{o}dinger equations with singular potentials. We
show well-posedness of the problem and existence of an optimal
control. In addition, the first order optimality system is
rigorously derived. Our results generalize the ones in \cite{Sp} in
several aspects.

{\bf Keywords:} Optimal bilinear control; Nonlinear Schr\"{o}dinger
equation; Optimal condition
\end{abstract}
\section{Introduction}
This paper is devoted to study an optimal bilinear control problem
for the following nonlinear Schr\"{o}dinger equation(NLS):
\begin{equation}\label{1.1}
\left\{
\begin{array}{l}
iu_{t}+\Delta u+ \lambda |u|^{2\sigma}u+
\phi ( t)V(x)u=0,\text{ }%
(t,x)\in [0,\infty )\times \mathbb{R}^{N}, \\
u(0,x) = u_0 (x),%
\end{array}%
\right.
\end{equation}%
where $u_0 \in H^{1}(\mathbb{R}^{N})$, $\phi (t)$ denotes the
control parameter and $V(x)$ is a given potential. The problem of
quantum control via external potentials $\phi (t)V(x)$, has
attracted a great deal of attention from physicians, see
\cite{BVR,HRB,Ho}. From the mathematical point of view, quantum
control problems are a specific example of optimal control problems,
see \cite{Co}, which consist in minimizing a cost functional
depending on the solution of a state equation (here, equation
\eqref{1.1}) and to characterize the minimum of the functional by an
optimality condition.

The mathematical research for optimal bilinear control of systems
governed by partial differential equations has a long history, see
\cite{Fa,Li} for a general overview. However, there are only a few
rigorous mathematical results about optimal bilinear control of
Schr\"{o}dinger equations. Recently, optimal control problems for
linear Schr\"{o}dinger equations have been investigated in
\cite{IK,BK,BS}. Moreover, those results have been tested
numerically in \cite{BS,YKY}. In particular, a mathematical
framework for optimal bilinear control of abstract linear
Schr\"{o}dinger equations was presented in \cite{IK}. In \cite{BK},
the authors considered the optimal bilinear control for the linear
Schr\"{o}dinger equations including coulombian and electric
potentials. For the following NLS of Gross-Pitaevskii type:
\begin{equation}\label{1.1'}
\left\{
\begin{array}{l}
iu_{t}+\Delta u-U(x)u-\lambda |u|^{2\sigma}u-
\phi ( t)V(x)u=0,\text{ }%
(t,x)\in [0,\infty )\times \mathbb{R}^{N}, \\
u(0,x) = u_0 (x),%
\end{array}%
\right.
\end{equation}%
where $\lambda \geq 0$, $U(x)$ is a subquadratic potential,
consequently restricting initial data $u_0\in \Sigma:=\{u\in
H^1(\mathbb{R}^{N}),~ and ~~xu\in L^2(\mathbb{R}^{N})\}$. The
authors in \cite{Sp}
 have presented a novel choice
for the cost term, which is based on the corresponding physical work
performed throughout the control process.
 The proof of the existence of an optimal control relies
heavily on the compact embedding $\Sigma \hookrightarrow
L^2(\mathbb{R}^{N})$. In contrast with \eqref{1.1'}, due to absence
of $U(x)u$ in \eqref{1.1}, we consider \eqref{1.1} in
$H^1(\mathbb{R}^{N})$. Therefore, how to overcome the difficulty
that embedding $H^1(\mathbb{R}^{N}) \hookrightarrow
L^2(\mathbb{R}^{N})$ is not compact, which is of particular
interest,  is one of main technique challenges in this paper.

Borrowing the idea of \cite{Sp}, we now define our optimal control
problem. The
natural candidate for an energy corresponding to \eqref{1.1} is
\begin{equation} \label{1.2}
E(t)=\frac{1}{2}\int_{\mathbb{R}^N}|\nabla
u(t,x)|^2dx-\frac{\lambda}{2\sigma+2
}\int_{\mathbb{R}^N}|u(t,x)|^{2\sigma +2}
dx-\frac{\phi(t)}{2}\int_{\mathbb{R}^N}V(x)|u(t,x)|^2dx.
\end{equation}
Although equation \eqref{1.1} enjoys mass conservation, i.e.,
$\|u(t,\cdot)\|_{L^2}=\|u_0\|_{L^2}$ for all $t\in \mathbb{R}$, the
energy $E(t)$ is not conserved. Indeed, its evolution is given by
\begin{equation} \label{1.3}
\frac{dE(t)}{dt}=-\frac{1}{2}\phi' ( t)\int_{\mathbb{R}^N} V(x)|u(t,x)|^2dx.
\end{equation}
Integrating this equality over the compact interval $[0,T]$, we
obtain
\begin{equation} \label{1.4}
E(T)-E(0)=\frac{1}{2}\int_0^T \phi' ( t)\int_{\mathbb{R}^N}
V(x)|u(t,x)|^2dxdt.
\end{equation}

 For any given $T>0$, we consider $H^1(0,T)$ as the real
vector space of control parameters $\phi $. Set
\begin{equation} \label{3.1}
X(0,T):=L^2((0,T),H_0^1)\cap W^{1,2}((0,T),H^{-1}),
\end{equation}.
For some $M_1>0$ and $M_2>0$, set $B_1:=\{u_0 \in H^1 ~and~\|u_0\|_{H^1}\leq M_1\}$ and $B_2:=\{\phi_0 \in \mathbb{R} ~and~|\phi_0|\leq M_2\}$
\[
\Lambda(0,T):=\{(u,\phi )\in X(0,T)\times H^1(0,T):~ u ~is ~the~
solution ~of~ \eqref{1.1}~ with~u(0)\in B_1~and~\phi(0)\in B_2\}.
\]
Thanks to Lemma 2.3, the set $\Lambda(0,T)$ is not empty. We
consequently define the objective functional $F=F(u,\phi)$ on
$\Lambda(0,T)$ by
\begin{equation} \label{1.5}
F(u,\phi):=\langle u(T,\cdot),Au(T,\cdot)\rangle
_{L^2}^2+\gamma_1\int_0^T(E'(t))^2dt+\gamma_2\int_0^T(\phi
'(t))^2dt,
\end{equation}
where parameters $\gamma_1 \geq 0$ and $\gamma_2>0$, $A :
H^1\rightarrow L^2$ is a bounded linear operator, essentially
self-adjoint on $L^2$ and localizing, i.e., there exists $R>0$, such
that for all  $\psi \in H^1$: $supp_{x\in \mathbb{R}^N}(A\psi
(x))\subseteq B(R)$.

% The objective of the control process is thereby quantified through an objective functional $F=F(u,\phi)$,
%which is minimized subject to the condition that the time-evolution of the quantum state is governed by the Hartree equation. Such
%objective functionals $F=F(u,\phi)$ usually consist of two parts,
%one being the desired physical quantity (observable) to be
%minimized, the other one describing the cost it takes to obtain the
%desired outcome through the control process. In quantum mechanics,
%the wave function $u(t,x)$ itself is not a physical observable.
%Rather, one considers self-adjoint linear operators $A$ acting on
%$u(t,x)$ and aims for a prescribed expectation value of $A$ at time
%$t = T > 0$, the final time of the control process. Such expectation
%values are computed by taking the $L^2$ inner product $\langle
%u(T,\cdot),Au(T,\cdot)\rangle_{L^2(\mathbb{R}^3)}$. There are many
%possible ways of modeling the cost it takes to reach a certain
%prescribed expectation value. The corresponding cost terms within
%$F=F(u,\phi)$ are often given by the norm of the control $\phi(t)$ in some function space.

%Thus a control $\phi ( t)$ acting for $t\in[0, T]$ upon a system
%described by \eqref{1.1} requires a certain amount of energy, which
%is given by \eqref{1.4}. Thus, it seems natural to include such a
%term in the cost functional of our problem in order to quantify the control action.

Now, we can define the following minimizing problem:
\begin{equation}\label{3.2}
F_*=\inf_{(u,\phi )\in \Lambda(0,T)}F(u,\phi).
\end{equation}
Firstly, we consider the existence of a minimizer for the above
minimizing problem. This is what the following theorem shows:
\begin{theorem}\label{th1}
Assume $0<\sigma <\frac{2}{N-2}$ if $\lambda <0$, or $0<\sigma
<\frac{2}{N}$ if $\lambda >0$. Let $V\in L^p+L^{\infty}$ for some
$p\geq 1$, $p>N/2$. Then, for any $T>0$, $M_1>0$, $M_2>0$,
$\gamma_1\geq0$ and $\gamma_2>0$, the optimal control problem
\eqref{3.2} has a minimizer $(u_*,\phi_*)\in\Lambda(0,T)$.
\end{theorem}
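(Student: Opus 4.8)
The plan is to use the direct method of the calculus of variations: take a minimizing sequence and extract a subsequence converging (in appropriate topologies) to a limit that is both admissible and achieves the infimum. Let me think through where compactness issues bite, since the paper flagged the loss of compactness of $H^1 \hookrightarrow L^2$ as the central difficulty.

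Let me outline the structure. We have $F_* = \inf F(u,\phi)$ over $\Lambda(0,T)$. The functional $F$ has three terms: the observation term $\langle u(T), Au(T)\rangle_{L^2}^2$, and two penalty terms involving $\int_0^T (E')^2$ and $\int_0^T (\phi')^2$. I need to get bounds on a minimizing sequence, extract weak limits, verify the limit solves the equation, and pass to the limit in $F$.

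Key point: the penalty term $\gamma_2 \int_0^T (\phi'_n)^2 dt$ is bounded along a minimizing sequence (since $F$ is bounded), and combined with $|\phi_n(0)| \le M_2$ this bounds $\phi_n$ in $H^1(0,T)$. So I get a weakly convergent subsequence $\phi_n \rightharpoonup \phi_*$ in $H^1(0,T)$, and by compact embedding $H^1(0,T) \hookrightarrow C([0,T])$, strong uniform convergence of $\phi_n \to \phi_*$. This is the one place compactness is genuinely available (it's 1D in time), and I'll lean on it heavily.

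For $u_n$: mass conservation gives $\|u_n(t)\|_{L^2} = \|u_n(0)\|_{L^2} \le M_1$. I'd use the energy/a priori estimates (from the well-posedness Lemma 2.3) together with the uniform bound on $\phi_n$ to bound $u_n$ in $L^\infty((0,T),H^1)$, and then bound $\partial_t u_n$ in $L^\infty((0,T),H^{-1})$ directly from the equation. This yields $u_n \rightharpoonup u_*$ weak-$*$ in $L^\infty((0,T),H^1)$ with $\partial_t u_n$ bounded in $L^\infty((0,T),H^{-1})$.

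The main obstacle is exactly the one the authors advertise: passing to the limit in the nonlinear term $|u_n|^{2\sigma}u_n$ and in the potential term $\phi_n V u_n$, and especially evaluating the limit of $u_n(T)$ and of $\int_{\mathbb{R}^N} V |u_n|^2\,dx$, all of which require \emph{strong} local compactness of $u_n$ rather than mere weak convergence. The resolution is a local (Aubin--Lions / Rellich) compactness argument: on any bounded ball $B(R)\subset \mathbb{R}^N$, the bounds on $u_n$ in $H^1$ and on $\partial_t u_n$ in $H^{-1}$ give, via Aubin--Lions, strong convergence $u_n \to u_*$ in $C([0,T],L^2(B(R)))$ (using $H^1(B(R)) \hookrightarrow\hookrightarrow L^2(B(R)) \hookrightarrow H^{-1}(B(R))$). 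Because the operator $A$ is localizing with $\mathrm{supp}(A\psi) \subseteq B(R)$, the observation term depends only on $u$ on the ball $B(R)$, so local strong convergence suffices to pass to the limit in $\langle u_n(T), Au_n(T)\rangle_{L^2}^2$. Similarly, since $V \in L^p + L^\infty$ with $p > N/2$, the potential term $\int V|u_n|^2$ can be split into the $L^\infty$ part (controlled by weak convergence plus mass bound on the tail) and the $L^p$ part (handled by local strong $L^2$ convergence together with an interpolation/Hölder estimate using the $H^1$ bound on the high-$L^q$ norms and the $L^p$ smallness of $V$ on the complement of a large ball); the tail contribution is made uniformly small using the integrability $V \in L^p$, which controls its mass outside a large ball. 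These local and tail estimates let me pass to the limit in both the state equation (so that $(u_*,\phi_*)$ is genuinely a solution, hence admissible, i.e.\ $(u_*,\phi_*)\in\Lambda(0,T)$) and in the energy functional $E$.

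Finally I assemble lower semicontinuity: the first term $\langle u_*(T), Au_*(T)\rangle_{L^2}^2$ is the limit (by local strong convergence at $t=T$, justified by the $C([0,T],L^2(B(R)))$ convergence above), and the two penalty terms $\gamma_1\int_0^T (E'_n)^2$ and $\gamma_2 \int_0^T (\phi'_n)^2$ are weakly lower semicontinuous in the respective weak topologies (norms are weakly l.s.c.). For the $E'$ term I must first verify that $E'_n \rightharpoonup E'_*$ weakly, using formula \eqref{1.3}, $E'(t) = -\tfrac{1}{2}\phi'(t)\int V|u|^2$: the factor $\int V|u_n|^2 dx$ converges (by the potential-term analysis above, uniformly in $t$), while $\phi'_n \rightharpoonup \phi'_*$ weakly in $L^2(0,T)$, so the product converges weakly and the square integral is l.s.c. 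Combining, $F(u_*,\phi_*) \le \liminf F(u_n,\phi_n) = F_*$, and since $(u_*,\phi_*)$ is admissible we get $F(u_*,\phi_*) = F_*$, so the minimizer exists. The delicate step throughout, and the one I expect to require the most care, is the uniform-in-time tail control of $\int V|u_n|^2$ that compensates for the missing global compactness of $H^1 \hookrightarrow L^2$.
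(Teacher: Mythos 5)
Your overall strategy coincides with the paper's: direct method, an $H^1(0,T)$-bound on $\phi_n$ from the $\gamma_2$-penalty and $|\phi_n(0)|\le M_2$, the bound $u_n\in L^\infty((0,T),H^1)\cap W^{1,\infty}((0,T),H^{-1})$, local-in-space strong compactness (you via Aubin--Lions on balls, the paper via Cazenave's Propositions 1.1.2 and 1.3.14 together with local Rellich compactness --- interchangeable here), identification of the limit of $|u_n|^{2\sigma}u_n$ against compactly supported test functions, the localization hypothesis on $A$ to pass to the limit in $\langle u_n(T),Au_n(T)\rangle_{L^2}^2$, and weak lower semicontinuity for the penalty terms. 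Structurally you have reproduced the paper's proof.

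There is, however, one step where your justification as written would fail: the convergence of $\omega_n(t)=\int_{\mathbb{R}^N}V(x)|u_n(t,x)|^2\,dx$, which you need in order to get $\phi_n'\omega_n\rightharpoonup\phi_*'\omega_*$ in $L^2(0,T)$ and hence the liminf inequality for the $\gamma_1$-term. You split $V=V_1+V_2$ with $V_1\in L^p$, $V_2\in L^\infty$. Your treatment of the $V_1$-part is correct and is precisely where $p>N/2$ enters: $\|V_1\|_{L^p(|x|>R)}\to0$ together with the uniform bound on $\|u_n\|_{L^{2p/(p-1)}}\lesssim\|u_n\|_{H^1}$ kills the tail, and local strong $L^2$ convergence handles the ball. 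But for the $V_2$-part you claim control ``by weak convergence plus mass bound on the tail''; this is not a proof. Weak convergence $u_n(t)\rightharpoonup u_*(t)$ in $H^1$ gives no tightness of $|u_n(t)|^2$, the conserved mass is not small on the tail, and $V_2$ need not decay at infinity, so $\int V_2|u_n(t)|^2$ can fail to converge to $\int V_2|u_*(t)|^2$ (mass may escape to spatial infinity; note the initial data $u_n(0)$ are only constrained to a ball of $H^1$ and may themselves translate off to infinity). Since $V$ is signed, a Fatou/Brezis--Lieb argument cannot rescue the liminf inequality either. This is a genuine gap when $\gamma_1>0$ --- and, to be fair, it is exactly the point the paper itself glosses over by invoking ``the same argument as Lemma 2.5 in [Sp]'', an argument which in that reference rests on the compact embedding $\Sigma\hookrightarrow L^2$ unavailable here. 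Two smaller omissions: before declaring $(u_*,\phi_*)\in\Lambda(0,T)$ you should check $u_*(0)\in B_1$ and $\phi_*(0)\in B_2$ (weak lower semicontinuity and pointwise convergence at $t=0$) and upgrade the weak limit to the unique mild solution, e.g.\ by Strichartz estimates as the paper does at the end of its Step 2.
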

{\textbf {Remarks.}} (1) In contrast with the results in \cite{Sp},
our results hold for unbounded potential $V$, both focusing and
defocusing nonlinearities. A typical example satisfying our
assumption on $V$ is $\frac{1}{|x|^\alpha}$ for some $0<\alpha <1$.

(2) Since the embedding $H^1(\mathbb{R}^{N})\hookrightarrow
L^2(\mathbb{R}^{N})$ is not compact, the method in \cite{Sp} fails
to work in our situation. We can derive the compactness of a
minimizing sequence by Propositions 1.1.2 and 1.3.14 in
\cite{Ca2003}.

Thanks to global well-posedness of equation \eqref{1.1}, for any
given initial data $u_0\in H^1$, we can define a mapping by
\[
u:~H^1(0,T)\rightarrow X(0,T):~~~\phi \mapsto u(\phi ).
\]
Using this mapping we introduce the unconstrained functional
\[
\mathcal{F}:H^1(0,T)\rightarrow \mathbb{R},~~~\phi \mapsto
\mathcal{F}(\phi):=F(u(\phi),\phi).
\]
In the following theorem, we investigate the differentiability of
unconstrained functional $\mathcal{F}$, and consequently obtain the first order optimality system.
\begin{theorem}\label{th2}
Let $N\leq 3$, $u_0 \in H^2$, $V, \nabla V\in L^p +L^{\infty}$ and
$V\in L^{2p}$ for some $p\geq 2$. Assume $\frac{1}{2}\leq \sigma
<\frac{2}{N-2}$ if $\lambda <0$, or $\frac{1}{2}\leq \sigma
<\frac{2}{N}$ if $\lambda >0$. Then the functional
$\mathcal{F}(\phi)$ is G\^{a}teaux differentiable and
\begin{equation}\label{e3}
\mathcal{F}'(\phi)=Re \int_{\mathbb{R}^N }
\bar{\varphi}(t,x)V(x)u(t,x)dx-2\frac{d}{dt}(\phi'(t)(\gamma_2+\gamma_1
\omega^2(t))),
\end{equation}
in the sense of distributions, where $\omega(t)=\int_{\mathbb{R}^N
}V(x)|u(t,x)|^2dx$ and $\varphi \in C([0,T],L^2)$ is the solution of
the adjoint equation \eqref{eq1}.
\end{theorem}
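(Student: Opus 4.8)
The plan is to combine a differentiability result for the control-to-state map $\phi\mapsto u(\phi)$ with a duality (Lagrangian) argument built on the adjoint equation \eqref{eq1}. First I would fix $\phi\in H^1(0,T)$ and a direction $h\in H^1(0,T)$, and study the solutions $u^\varepsilon:=u(\phi+\varepsilon h)$ of \eqref{1.1}. Differentiating the equation formally in $\varepsilon$ at $\varepsilon=0$ suggests that the directional derivative $w:=\lim_{\varepsilon\to0}(u^\varepsilon-u)/\varepsilon$ solves the linearized problem
\[
iw_t+\Delta w+\lambda\big((\sigma+1)|u|^{2\sigma}w+\sigma|u|^{2\sigma-2}u^2\bar w\big)+\phi V w=-hVu,\qquad w(0)=0,
\]
the initial condition being zero since $u_0$ is independent of $\phi$. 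The first real task is to make this rigorous: under $N\le3$, $u_0\in H^2$, $\sigma\ge\frac12$ and the stated integrability of $V$, I would show that $\phi\mapsto u(\phi)$ is G\^ateaux differentiable with derivative $w$, by bounding the remainder $u^\varepsilon-u-\varepsilon w$ through Strichartz estimates and a Gronwall argument in the $H^2$-framework.

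The hypothesis $\sigma\ge\frac12$ is exactly what renders the nonlinearity $z\mapsto|z|^{2\sigma}z$ continuously (real-)differentiable, so that the above linearization is legitimate; the assumption $u_0\in H^2$ together with $N\le3$ (hence $H^2\hookrightarrow L^\infty$) and the bounds $V,\nabla V\in L^p+L^\infty$, $V\in L^{2p}$ provide enough regularity and control of the singular terms $Vu,\ Vw$ to close both the fixed-point estimate for $u$ and the difference estimate for the remainder. This step carries the bulk of the analysis and is the main obstacle.

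With differentiability of $u(\phi)$ in hand, I would apply the chain rule to $\mathcal F(\phi)=F(u(\phi),\phi)$. Differentiating the three terms of \eqref{1.5} produces: (i) a terminal contribution $4\langle u(T),Au(T)\rangle\,\mathrm{Re}\langle Au(T),w(T)\rangle$ from $\langle u(T),Au(T)\rangle_{L^2}^2$, using that $A$ is self-adjoint; and (ii) contributions from the $\gamma_1$ and $\gamma_2$ terms which split into a part depending linearly on $h'$ and a part depending on $w$ through $\dot\omega=2\,\mathrm{Re}\int_{\mathbb R^N}V\bar u\,w\,dx$. All terms linear in $w$ are then eliminated by the adjoint state: taking $\varphi\in C([0,T],L^2)$ to be the solution of the backward problem \eqref{eq1} whose terminal data and distributed source are chosen to match (i) and the $w$-dependent part of (ii), I would pair the linearized equation for $w$ against $\varphi$, integrate over $[0,T]\times\mathbb R^N$, and integrate by parts in $t$ and $x$. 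By construction of the adjoint operator the linearized-nonlinearity and potential terms cancel, the boundary term at $t=0$ vanishes since $w(0)=0$, and the terminal boundary term reproduces precisely the $w$-dependent contributions, so that together they collapse to $\int_0^T h(t)\,\mathrm{Re}\int_{\mathbb R^N}\bar\varphi Vu\,dx\,dt$.

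Finally I would collect the remaining terms, which are linear in $h'$ and, using $E'(t)=-\tfrac12\phi'(t)\omega(t)$, take the form $\int_0^T\phi'(t)h'(t)\big(\gamma_2+\gamma_1\omega^2(t)\big)\,dt$ up to normalization. Since these involve $h'$ rather than $h$, one further integration by parts in $t$ is needed; because $\phi'(\gamma_2+\gamma_1\omega^2)$ need not be classically differentiable, the resulting term $-2\frac{d}{dt}\big(\phi'(t)(\gamma_2+\gamma_1\omega^2(t))\big)$ is read in $\mathcal D'(0,T)$. Adding the two pieces yields the representation \eqref{e3} for $\mathcal F'(\phi)$ in the sense of distributions. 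Throughout, the regularity $\varphi\in C([0,T],L^2)$, $u\in C([0,T],H^2)$ and the integrability of $V$ guarantee that every pairing, in particular $\int\bar\varphi Vu$ and $\int V\bar u w$, is finite, which justifies the formal manipulations.
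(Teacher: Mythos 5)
Your proposal is essentially correct in its accounting of where each term of \eqref{e3} comes from, but it follows a genuinely different route from the paper. You propose to first prove that the control-to-state map $\phi\mapsto u(\phi)$ is G\^{a}teaux differentiable, i.e.\ to solve the linearized state equation for $w$ and show that $u^\varepsilon-u-\varepsilon w=o(\varepsilon)$, and only then to apply the chain rule and eliminate the $w$-dependent terms via the adjoint state. The paper (following Theorem 4.6 of \cite{Sp}) deliberately avoids introducing the linearized state at all: it splits $\mathcal{F}(\tilde\phi)-\mathcal{F}(\phi)$ into the three differences $\mathcal{J}_1,\mathcal{J}_2,\mathcal{J}_3$, expands each one to first order in the \emph{difference} $\tilde u-u$ (not its derivative), pairs the equation satisfied by $\tilde u-u$ directly against the adjoint state $\varphi$ of \eqref{eq1}, and controls all quadratic remainders by the Lipschitz estimate $\|\tilde u-u\|_{L^\infty((0,T),H^2)}\le C|\varepsilon|\,\|\delta_\phi\|_{H^1}$ of Proposition 4.3. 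The only analytic inputs are thus Proposition 4.1 (well-posedness of the adjoint problem), Proposition 4.3, and the embedding $H^2\hookrightarrow L^\infty$ for $N\le 3$; no well-posedness theory for the linearized equation with the singular potential $V$ is ever needed. Your route is viable --- the linearized equation has the same structure as the adjoint equation treated in Proposition 4.1, and for $\sigma\ge\tfrac12$ the derivative of $z\mapsto|z|^{2\sigma}z$ is locally Lipschitz so the remainder estimate closes --- and it buys a strictly stronger conclusion (differentiability of the control-to-state map itself), but it front-loads a substantial amount of extra work that the paper's argument shows to be unnecessary for \eqref{e3}. If you pursue your version, be aware that the step you yourself identify as ``the main obstacle'' is precisely the step the paper circumvents, so you cannot lean on anything already proved there to discharge it; the Lipschitz bound of Proposition 4.3 gives you $\|u^\varepsilon-u\|=O(\varepsilon)$ but not, by itself, the existence of the limit $w$.
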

{\textbf {Remarks.}} (1) Under the assumptions on $u_0 $ and $V$, it
follows from Lemma 2.5 that the solution $u \in L^\infty((0,T),H^2)$
of \eqref{1.1}. Hence, we deduce from the inequality
$\|Vu\|_{L^2}\leq \|V\|_{L^p}\|u\|_{L^{\frac{2p}{p-2}}}$ and
$\varphi \in C([0,T],L^2)$ that the right hand side of \eqref{e3} is
well-defined.

(2) Because control potential $V$ is unbounded, we cannot follow the
method in \cite{Sp} to obtain sufficiently high regularity of $u$,
the solution of the NLS equation \eqref{1.1}. We resume the idea due
to T.Kato, (see, \cite{Ca2003}), based on the general idea for
Schr\"{o}dinger equations, that two space derivative cost the same
as one time derivative.

(3) In contrast with the assumption $\sigma \in \mathbb{N}$ in
\cite{Sp}, our results follow for $\frac{1}{2}\leq \sigma
<\frac{2}{N-2}$ if $\lambda <0$, or $\frac{1}{2}\leq \sigma
<\frac{2}{N}$ if $\lambda >0$.

As an immediate corollary of Theorem 1.2, we derive the precise
characterization for the critical points $\phi_*$ of functional
$\mathcal{F}$. The proof is the same as that of Corollary 4.8 in
\cite{Sp}, so we omit it.
\begin{corollary}\label{th3}
Let $u_*$ be the solution of \eqref{1.1} with control $\phi_*$, and
$\varphi_*$ be the solution of corresponding adjoint equation
\eqref{eq1}. Then $\phi_*\in C^2(0,T)$ is a classical solution of
the following ordinary differential equation
\begin{equation}
\frac{d}{dt}(\phi'_*(t)(\gamma_2+\gamma_1\omega_*^2(t)))=\frac{1}{2}Re
\int_{\mathbb{R}^N }\bar{\varphi_*}(t,x)V(x)u_*(t,x)dx.
\end{equation}
subject to the initial data $\phi_*(0)=\phi_0$ and $\phi'_*(T)=0$.
\end{corollary}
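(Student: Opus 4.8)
The plan is to obtain the stated ordinary differential equation as the first-order condition $\mathcal{F}'(\phi_*)=0$ and then to upgrade the resulting distributional identity to a classical one. Since $\phi_*$ is a critical point of the unconstrained functional $\mathcal{F}$, which by Theorem \ref{th2} is G\^{a}teaux differentiable with derivative \eqref{e3}, testing the identity $\mathcal{F}'(\phi_*)=0$ against arbitrary $\psi\in C_c^\infty(0,T)$ makes all endpoint contributions drop out and yields, in $\mathcal{D}'(0,T)$,
\[
\frac{d}{dt}\bigl(\phi'_*(t)(\gamma_2+\gamma_1\omega_*^2(t))\bigr)=\frac12\,Re\int_{\mathbb{R}^N}\bar{\varphi_*}(t,x)V(x)u_*(t,x)\,dx,
\]
which is precisely the claimed equation, with $\varphi_*$ solving the adjoint problem \eqref{eq1} for $\phi_*$ and $\omega_*(t)=\int_{\mathbb{R}^N}V(x)|u_*(t,x)|^2\,dx$.

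To recover the boundary conditions I would allow variations $\psi$ that do not vanish at the endpoints. Writing $g(t):=\gamma_2+\gamma_1\omega_*^2(t)$, the interior term $-2\frac{d}{dt}(\phi'_*g)$ of \eqref{e3} arises from integrating $2\int_0^T\phi'_*g\,\psi'\,dt$ by parts, which also produces the endpoint term $2[\phi'_*(t)g(t)\psi(t)]_0^T$. Since the initial value of the control is prescribed, admissible directions satisfy $\psi(0)=0$ and only the contribution at $t=T$ survives; as $\psi(T)$ is arbitrary and $g(T)\ge\gamma_2>0$, the optimality condition forces the natural transversality condition $\phi'_*(T)=0$, while the prescribed initial value records $\phi_*(0)=\phi_0$.

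It remains to prove $\phi_*\in C^2(0,T)$, which I would obtain by a bootstrap built on the state regularity. By Lemma 2.5 and the hypotheses on $u_0$ and $V$ one has $u_*\in L^\infty((0,T),H^2)$, and the Kato-type argument underlying that lemma in fact gives $u_*\in C([0,T],H^2)\cap C^1([0,T],L^2)$; combined with $\varphi_*\in C([0,T],L^2)$ and $\|Vu_*\|_{L^2}\le\|V\|_{L^p}\|u_*\|_{L^{2p/(p-2)}}$, this shows that the right-hand side $R(t):=\frac12 Re\int_{\mathbb{R}^N}\bar{\varphi_*}Vu_*\,dx$ is continuous on $[0,T]$. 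Differentiating $\omega_*$ and using $\partial_t u_*\in C([0,T],L^2)$ gives $\omega_*\in C^1$, whence $g\in C^1$ with $g\ge\gamma_2>0$. The equation then reads $\phi'_*g\in C^1$, and since $1/g\in C^1$ the product $\phi'_*=(\phi'_*g)(1/g)$ is $C^1$; thus $\phi_*\in C^2(0,T)$ and the identity holds in the classical sense.

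The main obstacle is this regularity step rather than the formal derivation: one must secure enough time continuity of $u_*$ and $\varphi_*$—through the $H^2$-theory of \eqref{1.1} and the well-posedness of \eqref{eq1}—to guarantee simultaneously the continuity of the adjoint pairing $R(t)$ and the $C^1$ regularity of $\omega_*$, with the singular unbounded potential handled throughout by the $L^p+L^\infty$ splitting and H\"{o}lder's inequality. A secondary point needing care is the passage from \eqref{e3}, valid for interior test directions, to the boundary condition at $T$: one must faithfully track the endpoint terms discarded in the interior identity and verify that, with $\phi_*(0)$ held fixed, only the free terminal variation contributes.
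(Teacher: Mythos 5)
Your proposal is correct and follows essentially the same route as the paper, which omits the proof and refers to Corollary 4.8 of \cite{Sp}: one sets $\mathcal{F}'(\phi_*)=0$ using the G\^{a}teaux derivative formula \eqref{e3} from Theorem \ref{th2}, recovers the transversality condition $\phi'_*(T)=0$ from the endpoint term of the integration by parts (admissible variations fix $\psi(0)=0$), and bootstraps the $C^2$ regularity from the continuity of the adjoint pairing and the lower bound $\gamma_2+\gamma_1\omega_*^2\geq\gamma_2>0$. No substantive deviation from the intended argument.
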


This paper is organized as follows: in Section 2, we will collect
some preliminaries such as compactness results, global existence and
regularity of \eqref{1.1}. In section 3, we will show Theorem 1.1.
In section 4, we firstly analyze well-posedness of the adjoint
equation. Next, the Lipschitz continuity of solution $u=u(\phi)$
with respect to control parameter $\phi$ is obtained. Finally, we
give the proof of Theorem 1.2. Some of the steps of the proof follow
\cite{Sp}, to avoid repetitions we will mainly focus on the
differences with respect to \cite{Sp}.

{\textbf {Notation.}} Throughout this paper, we use the following
notation. $C> 0$ will stand for a constant that may different from
line to line when it does not cause any confusion. Since we
exclusively deal with $\mathbb{R}^N$, we often use the
abbreviations $ L^{r}=L^{r}(\mathbb{R}^{N})$,
$H^s=H^s(\mathbb{R}^N)$.
  Given any interval $I\subset
\mathbb{R}$, the norms of mixed spaces
$L^q(I,L^{r}(\mathbb{R}^{N}))$ and $L^q(I,H^s(\mathbb{R}^N))$ are
denoted by $\|\cdot\|_{L^q(I,L^{r})}$ and $\|\cdot\|_{L^q(I,H^s)}$
respectively. We denote by $U(t):=e^{it\triangle }$ the free
Schr\"{o}dinger propagator, which is isometric on $H^{s}$ for every
$s\geq 0$, see \cite{Ca2003}. We recall that a pair of exponents $(q,r)$ is
Schr\"{o}dinger-admissible if
$\frac{2}{q}=N(\frac{1}{2}-\frac{1}{r})$ and $2 \leq r \leq
\frac{2N}{N-2}$, ($ 2\leq r\leq \infty \, \text{ if } \, N=1$;
$2\leq r <\infty \, \text{ if } \,  N=2$).

\section{Preliminaries}
In this section, we recall some useful results. First, we recall the
following two compactness lemmas which is vital in our paper, see
\cite{Ca2003} for detailed presentation.
\begin{lemma}\cite{Ca2003}
Let $X\hookrightarrow Y$ be two Banach spaces, $I$ be a bounded,
open interval of $\mathbb{R}$, and $(u_n)_{n\in\mathbb{N}}$ be a
bounded sequence in $C(\bar{I},Y)$. Assume that $u_n(t)\in X$ for
all $(n,t)\in \mathbb{N}\times I$ and that
$sup\{\|u_n(t)\|_X,(n,t)\in \mathbb{N}\times I\}=K<\infty$. Assume
further that $u_n$ is uniformly equicontinuous in $Y$. If $X $ is
reflexive, then there exist a function $u\in C(\bar{I},Y)$ which is
weakly continuous $\bar{I}\rightarrow X$ and some subsequence
$(u_{n_k})_{k\in\mathbb{N}}$ such that for every $t\in \bar{I}$,
$u_{n_k}(t)\rightharpoonup u(t)$ in $X$ as $k\rightarrow \infty$.
\end{lemma}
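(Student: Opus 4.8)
The statement is a weak Arzel\`{a}--Ascoli theorem, so the plan is to combine a diagonal extraction on a countable dense set of times (using reflexivity of $X$) with the uniform $Y$-equicontinuity in order to pass to every $t\in\bar{I}$. First I would fix a countable dense subset $D=\{t_j\}_{j\in\mathbb{N}}\subset\bar{I}$. Since $\|u_n(t_j)\|_X\le K$ for all $n$ and $X$ is reflexive, each sequence $(u_n(t_j))_n$ admits a weakly convergent subsequence in $X$; a Cantor diagonal argument then yields a single subsequence, still denoted $(u_{n_k})_k$, and elements $u(t_j)\in X$ with $u_{n_k}(t_j)\rightharpoonup u(t_j)$ weakly in $X$ for every $t_j\in D$. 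Because the embedding $X\hookrightarrow Y$ is continuous, every $f\in Y^{*}$ restricts to an element of $X^{*}$, so this weak convergence also holds in $Y$.

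The next step is to define $u(t)$ for $t\notin D$ and to verify that it takes values in $X$. Using weak lower semicontinuity of $\|\cdot\|_Y$ together with equicontinuity, I would estimate, for $t_i,t_j\in D$,
\[\|u(t_i)-u(t_j)\|_Y\le\liminf_{k}\|u_{n_k}(t_i)-u_{n_k}(t_j)\|_Y,\]
which becomes small once $|t_i-t_j|$ is small. Hence $(u(t_j))_j$ is Cauchy in $Y$ as $t_j\to t$, and I define $u(t)$ to be its $Y$-limit. To see that $u(t)\in X$, observe that $\|u(t_j)\|_X\le\liminf_k\|u_{n_k}(t_j)\|_X\le K$, so $(u(t_j))_j$ is bounded in the reflexive space $X$; any weak-$X$ cluster point of a subsequence must coincide with the $Y$-limit $u(t)$, giving $u(t)\in X$ together with the uniform bound $\|u(t)\|_X\le K$.

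The heart of the argument, and what I expect to be the main obstacle, is upgrading the pointwise weak convergence from $D$ to \emph{every} $t\in\bar{I}$, since off $D$ only $Y$-information (equicontinuity) is available while the desired conclusion concerns weak-$X$ convergence. Here I would argue by subsequences: $(u_{n_k}(t))_k$ is bounded in $X$, so by reflexivity any subsequence has a further sub-subsequence with $u_{n_{k_l}}(t)\rightharpoonup v$ in $X$, hence in $Y$. Comparing $v$ with $u(t_j)$ for $t_j\to t$, and invoking equicontinuity and weak lower semicontinuity in $Y$, forces $\|v-u(t_j)\|_Y$ to be small, whence $v=u(t)$; as this limit is independent of the chosen sub-subsequence, the full sequence satisfies $u_{n_k}(t)\rightharpoonup u(t)$ in $X$ for every $t\in\bar{I}$.

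Finally, continuity of $u$ in $Y$ follows from $\|u(s)-u(t)\|_Y\le\liminf_k\|u_{n_k}(s)-u_{n_k}(t)\|_Y$ and the equicontinuity, so that $u\in C(\bar{I},Y)$; and weak continuity $\bar{I}\to X$ follows from the same subsequence-identification scheme applied to an arbitrary sequence $t_m\to t$, using the uniform bound $\|u(t_m)\|_X\le K$ and reflexivity to identify every weak-$X$ cluster point of $(u(t_m))$ with $u(t)$.
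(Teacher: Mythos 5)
This lemma is not proved in the paper at all: it is quoted verbatim from Cazenave \cite{Ca2003} (Proposition 1.1.2 there), and the proof in that reference is precisely the argument you give --- diagonal extraction on a countable dense set of times via reflexivity, identification of weak-$X$ and strong-$Y$ limits through the embedding $X\hookrightarrow Y$, and the subsequence-of-a-subsequence scheme to upgrade to all $t\in\bar{I}$. Your write-up is correct and complete; the only point worth making explicit is that the hypothesis gives $u_n(t)\in X$ with $\|u_n(t)\|_X\le K$ only for $t$ in the open interval $I$, so at the endpoints of $\bar{I}$ you should first extend this bound by the same cluster-point identification (take $s\to t$, $s\in I$, and use reflexivity plus $Y$-continuity of $u_n$) before running your argument there.
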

\begin{lemma}\cite{Ca2003}
Let $I$ be a bounded interval of $\mathbb{R}$, and
$(u_n)_{n\in\mathbb{N}}$ be a bounded sequence of
$L^\infty(I,H^1_0)\cap W^{1,\infty}(I,H^{-1})$. Then, there exist
$u\in L^\infty(I,H^1_0)\cap W^{1,\infty}(I,H^{-1})$ and some
subsequence $(u_{n_k})_{k\in\mathbb{N}}$ such that for every $t\in
\bar{I}$, $u_{n_k}(t)\rightharpoonup u(t)$ in $H^1_0$ as
$k\rightarrow \infty$.
\end{lemma}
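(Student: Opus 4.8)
The plan is to deduce Lemma 2.2 from Lemma 2.1 by choosing the reflexive space $X=H_0^1$ and the larger space $Y=H^{-1}$, using the chain of embeddings $H_0^1\hookrightarrow L^2\hookrightarrow H^{-1}$. The guiding observation is that the two standing hypotheses on $(u_n)$ encode precisely what Lemma 2.1 needs: the $L^\infty(I,H_0^1)$ bound supplies the uniform $X$-bound, while the $W^{1,\infty}(I,H^{-1})$ bound supplies the uniform equicontinuity in $Y$; reflexivity of the Hilbert space $H_0^1$ provides the remaining ingredient.

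First I would record the elementary regularity of each $u_n$. Since $u_n\in W^{1,\infty}(I,H^{-1})$, it is Lipschitz from $\bar I$ into $H^{-1}$, hence $u_n\in C(\bar I,H^{-1})$ with $\|u_n(t)-u_n(s)\|_{H^{-1}}\le \|u_n'\|_{L^\infty(I,H^{-1})}\,|t-s|$. Because $\sup_n\|u_n'\|_{L^\infty(I,H^{-1})}=:L<\infty$, this single Lipschitz constant gives uniform equicontinuity of $(u_n)$ in $Y=H^{-1}$. The $L^\infty(I,H_0^1)$ bound gives $\sup_n\|u_n\|_{L^\infty(I,H_0^1)}=:K<\infty$, which together with $H_0^1\hookrightarrow H^{-1}$ bounds $(u_n)$ in $C(\bar I,H^{-1})$.

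The one delicate point, which I expect to be the main obstacle, is that Lemma 2.1 requires $u_n(t)\in X$ with a pointwise bound for \emph{every} $t$, whereas the $L^\infty$ hypothesis only controls $\|u_n(t)\|_{H_0^1}$ for almost every $t$. I would upgrade this using the standard weak-continuity result (see \cite{Ca2003}) that a function in $L^\infty(I,H_0^1)\cap C(\bar I,H^{-1})$ with $H_0^1$ reflexive is in fact weakly continuous from $\bar I$ into $H_0^1$; consequently each $u_n(t)$ lies in $H_0^1$ for every $t$ with $\|u_n(t)\|_{H_0^1}\le K$. This is exactly where reflexivity of $H_0^1$ is used. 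With this in hand, all hypotheses of Lemma 2.1 are met, and it yields a function $u\in C(\bar I,H^{-1})$, weakly continuous into $H_0^1$, and a subsequence with $u_{n_k}(t)\rightharpoonup u(t)$ in $H_0^1$ for every $t\in\bar I$, which is the asserted convergence.

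Finally I would verify the membership $u\in L^\infty(I,H_0^1)\cap W^{1,\infty}(I,H^{-1})$. For the first, weak lower semicontinuity of the $H_0^1$-norm under $u_{n_k}(t)\rightharpoonup u(t)$ gives $\|u(t)\|_{H_0^1}\le\liminf_k\|u_{n_k}(t)\|_{H_0^1}\le K$ for every $t$, so $u\in L^\infty(I,H_0^1)$. For the second, since $L^\infty(I,H^{-1})=\big(L^1(I,H_0^1)\big)^*$ and $(u_n')$ is bounded there, Banach--Alaoglu furnishes, after a further subsequence, a weak-$*$ limit $v\in L^\infty(I,H^{-1})$ with $\|v\|_{L^\infty(I,H^{-1})}\le L$; passing to the limit in the distributional identity $\int_I u_{n_k}(t)\,\zeta'(t)\,dt=-\int_I u_{n_k}'(t)\,\zeta(t)\,dt$ for scalar test functions $\zeta\in C_c^\infty(I)$, using the pointwise weak convergence on the left and the weak-$*$ convergence on the right, identifies $v=u'$, whence $u\in W^{1,\infty}(I,H^{-1})$. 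Once the everywhere-pointwise upgrade of the preceding paragraph is settled, these closing steps are routine lower-semicontinuity and weak-$*$ compactness arguments.
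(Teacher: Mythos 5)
Your proof is correct. The paper itself offers no argument for this lemma---it is quoted verbatim from \cite{Ca2003} (Proposition 1.3.14 there)---and your derivation of Lemma 2.2 from Lemma 2.1 with $X=H_0^1$, $Y=H^{-1}$ is exactly the standard proof in that reference, including the one genuinely delicate step you isolate: upgrading the a.e.\ bound $\|u_n(t)\|_{H_0^1}\le K$ to an everywhere bound via weak continuity of the $H^{-1}$-continuous representative, which is what Lemma 2.1 actually requires.
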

In the following lemma, we establish some existence results of
equation \eqref{1.1}.
\begin{lemma}
Let $u_0\in H^1$ and $V\in L^p+L^{\infty}$ for some $p\geq 1$,
$p>N/2$. Assume $0<\sigma <\frac{2}{N-2}$ if $\lambda <0$, or
$0<\sigma <\frac{2}{N}$ if $\lambda >0$. For any given $T>0$, $\phi
\in H^1(0,T)$, there exists a unique mild solution $u \in
C([0,T],H^1)$ of problem \eqref{1.1}.
\end{lemma}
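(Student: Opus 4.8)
The plan is to establish local well-posedness via a fixed-point argument on the Duhamel formulation, then upgrade to global existence using the conservation laws together with suitable a priori bounds. The key observation is that the potential term $\phi(t)V(x)u$ can be controlled in Strichartz spaces because $V \in L^p + L^\infty$ with $p > N/2$ and $\phi \in H^1(0,T) \hookrightarrow C([0,T])$ is bounded on the compact interval. First I would write the mild solution as
\begin{equation*}
u(t) = U(t)u_0 + i\int_0^t U(t-s)\bigl(\lambda|u(s)|^{2\sigma}u(s) + \phi(s)V(x)u(s)\bigr)\,ds,
\end{equation*}
and set up the contraction in a ball of $C([0,\tau],H^1) \cap L^q((0,\tau),W^{1,r})$ for a suitable admissible pair $(q,r)$ and small $\tau>0$.

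The heart of the local argument is the Strichartz estimates applied to the Duhamel term. For the nonlinearity $\lambda|u|^{2\sigma}u$, the subcritical restrictions $\sigma < 2/(N-2)$ (energy-subcritical) are exactly what is needed to close the standard Kato fixed-point estimate in $H^1$; I would decompose $V = V_p + V_\infty$ with $V_p \in L^p$, $V_\infty \in L^\infty$, and handle the singular part $\phi V_p u$ by Hölder plus Sobolev embedding. Concretely, since $p > N/2$, the product $V_p u$ lands in a space dual to an admissible Strichartz pair, so that $\|\phi V u\|$ in the relevant dual Strichartz norm is bounded by $\|\phi\|_{L^\infty(0,\tau)}\,\|V\|_{L^p+L^\infty}\,\|u\|$ times a small power of $\tau$. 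Combining these with the contraction mapping principle on a small time interval yields a unique local solution, and the same estimates give continuous dependence.

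To pass from local to global, I would invoke the two conservation-type identities already highlighted in the paper. Mass conservation $\|u(t)\|_{L^2} = \|u_0\|_{L^2}$ is immediate. For the $H^1$ bound one uses the energy $E(t)$ in \eqref{1.2}: although $E(t)$ is not conserved, its derivative \eqref{1.3} is integrable since $\phi' \in L^2(0,T)$ and $\omega(t) = \int V|u|^2\,dx$ is controlled via $\|Vu\|$ estimates and mass conservation. In the defocusing case $\lambda < 0$ the energy directly dominates $\|\nabla u\|_{L^2}^2$, while in the focusing case $\lambda > 0$ the mass-subcritical condition $\sigma < 2/N$ lets the Gagliardo–Nirenberg inequality absorb the nonlinear term into the kinetic energy with a small coefficient, so that $\|\nabla u\|_{L^2}^2$ stays bounded on $[0,T]$. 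The potential contribution $\tfrac{\phi}{2}\int V|u|^2\,dx$ is likewise controlled by $\|V\|_{L^p+L^\infty}$, mass conservation, and a fraction of $\|\nabla u\|_{L^2}^2$ via interpolation, using $p > N/2$.

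The main obstacle I anticipate is the a priori $H^1$ bound, specifically controlling the singular, time-dependent potential term $\phi(t)\int V|u|^2\,dx$ uniformly in $t \in [0,T]$ so that the local solution does not blow up in finite time. This requires carefully combining the $L^p + L^\infty$ structure of $V$ with Gagliardo–Nirenberg/interpolation to extract only a small multiple of $\|\nabla u\|_{L^2}^2$, and then closing a Gronwall-type inequality using the $H^1(0,T)$ regularity of $\phi$. Once this uniform bound is in hand, the standard blow-up alternative forces the local solution to extend to all of $[0,T]$, giving the unique mild solution $u \in C([0,T],H^1)$.
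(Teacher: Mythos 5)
Your proposal is correct and follows essentially the same route as the paper: local well-posedness by the standard Strichartz/contraction argument with $\phi$ controlled through the embedding $H^1(0,T)\hookrightarrow L^\infty(0,T)$, followed by a global $H^1$ bound obtained from mass conservation and the non-conserved energy, whose growth is controlled via $\phi'\in L^2(0,T)$, with the potential and (in the focusing, mass-subcritical case) the nonlinear term absorbed by interpolation/Gagliardo--Nirenberg and Young's inequality with $\varepsilon$. No substantive differences to report.
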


\begin{proof}
When $\phi$ is a constant, the author in \cite{Ca2003} showed that
the solution of \eqref{1.1} is local well-posedness. For our case,
since $\phi \in H^1(0,T)\hookrightarrow L^\infty(0,T)$, we only need
to take the $L^\infty$ norm of $\phi$ when the term $\phi Vu$ has to
be estimated in some norms. Keeping this in mind and applying the
method in \cite{Ca2003}, one can show the local well-posedness of
\eqref{1.1}. Hence, in order to prove this lemma, it suffices to show
\begin{equation}\label{100}
\|u(t)\|_{H^1} \leq C(T,\|u_0\|_{H^1},\phi).
\end{equation}
Indeed, we deduce from \eqref{1.3} and mass conservation that
\[
\|E'\|_{L^2(0,T)}\leq
C\|\phi'\|_{L^2(0,T)}(\|V_1\|_{L^p}\|u\|_{L^{\frac{2p}{p-1}}}^2+
\|V_2\|_{L^\infty}\|u_0\|_{L^2}^2).
\]
This implies
\begin{align*}
E(t)=&E(0)+\int_0^t E'(s)ds\leq
E(0)+\left(T\int_0^T(E'(s))^2ds\right)^{1/2}\nonumber\\\leq &
E(0)+CT^{1/2}\|\phi'\|_{L^2(0,T)}(\|V_1\|_{L^p}\|u\|_{L^{\frac{2p}{p-1}}}^2+
\|V_2\|_{L^\infty}\|u_0\|_{L^2}^2).
\end{align*}
When $\lambda \leq 0$, it follows from \eqref{1.2} that
\begin{align}\label{101}
\|\nabla u(t)\|_{L^2 }^2\leq &CE(t)+C\|\phi\|_{L^\infty(0,T)}\int
V|u|^2dx \nonumber
\\\leq & CE(0)+CT^{1/2}\|\phi'\|_{L^2(0,T)}(\|V_1\|_{L^p}\|u\|_{L^{\frac{2p}{p-1}}}^2+
\|V_2\|_{L^\infty}\|u_0\|_{L^2}^2) \nonumber
\\&+C\|\phi\|_{L^\infty(0,T)}(\|V_1\|_{L^p}\|u\|_{L^{\frac{2p}{p-1}}}^2+
\|V_2\|_{L^\infty}\|u_0\|_{L^2}^2),
\end{align}
which, together with the embedding $H^1\hookrightarrow
L^{\frac{2p}{p-1}}$ and Young's inequality with $\varepsilon$,
implies \eqref{100}.

When $\lambda > 0$, by the same argument as above, we have
\begin{align}
\|\nabla u(t)\|_{L^2 }^2 \leq &
CE(0)+CT^{1/2}\|\phi'\|_{L^2(0,T)}(\|V_1\|_{L^p}\|u\|_{L^{\frac{2p}{p-1}}}^2+
\|V_2\|_{L^\infty}\|u_0\|_{L^2}^2) \nonumber
\\&+C\|\phi\|_{L^\infty(0,T)}(\|V_1\|_{L^p}\|u\|_{L^{\frac{2p}{p-1}}}^2+
\|V_2\|_{L^\infty}\|u_0\|_{L^2}^2)+C\|u\|_{L^{2\sigma +2}}^{2\sigma
+2},
\end{align}
It follows from Gagliardo-Nirenberg's inequality
 that
\begin{equation}\label{l0}
\|u\|_{L^{2\sigma +2}}^{2\sigma +2}\leq C\|u\|_{H^1}^{N\sigma
}\|u\|_{L^2}^{2\sigma +2-N\sigma}.
\end{equation}
Since $N\sigma<2$, \eqref{100} follows from Young's inequality with
$\varepsilon$.
\end{proof}
\begin{lemma}\cite{Ca2003}
Let $J\ni 0$ be a bounded interval, $(\gamma,\rho)$ be an admissible
pair and consider $f \in L^{\infty}(J,L^2)$ such that $f_t \in
L^{\gamma^\prime}(J,L^{\rho^\prime})$. If
\[
v(t)=i\int_0^t U(t-s)f(s)ds~~~for~all~t\in J,
\]
then $v \in L^{\infty}(J,H^2)\cap C^1(J,L^2)\cap W^{1,a}(J,L^b)$ for
every admissible pair $(a,b) $ and
\[
\|\Delta v\|_{ L^{\infty}(J,L^2)}\leq \|f\|_{ L^{\infty}(J,L^2)}
+\|f(0)\|_{ L^2}+C \|f\|_{L^{\gamma^\prime}(J,L^{\rho^\prime})},
\]
where $C$ is independent of $J$ and $f$.
\end{lemma}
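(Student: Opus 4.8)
The plan is to exploit Kato's observation that, for the Schr\"odinger flow, one time derivative controls two spatial derivatives, thereby reducing the $H^2$ bound to an $L^2$ bound on $v_t$. First I would note that $v(t)=i\int_0^t U(t-s)f(s)\,ds$ is by construction the Duhamel solution of the inhomogeneous equation $iv_t+\Delta v=-f$ with $v(0)=0$; differentiating the propagator in $t$ and using $\partial_\tau U(\tau)=i\Delta U(\tau)$ yields this identity directly. Consequently $\Delta v=-iv_t-f$, so
\[
\|\Delta v\|_{L^\infty(J,L^2)}\le \|v_t\|_{L^\infty(J,L^2)}+\|f\|_{L^\infty(J,L^2)},
\]
and the whole problem is reduced to estimating $v_t$ in $L^2$.

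Next I would obtain an explicit formula for $v_t$. Changing variables $\tau=t-s$ and writing $v(t)=i\int_0^t U(\tau)f(t-\tau)\,d\tau$, the Leibniz rule produces a boundary term and an interior term,
\[
v_t(t)=iU(t)f(0)+i\int_0^t U(t-s)f_t(s)\,ds.
\]
This is the crucial step: the time derivative falls either on the propagator at the endpoint (producing the free evolution of the datum $f(0)$) or on $f$ itself (producing a new Duhamel term driven by $f_t$). Since $U(t)$ is an isometry on $L^2$, the first term contributes exactly $\|f(0)\|_{L^2}$. For the second term I would invoke the inhomogeneous (retarded) Strichartz estimate with the endpoint admissible pair $(\infty,2)$ on the left and the admissible pair $(\gamma,\rho)$ on the right,
\[
\Big\|\int_0^t U(t-s)f_t(s)\,ds\Big\|_{L^\infty(J,L^2)}\le C\,\|f_t\|_{L^{\gamma'}(J,L^{\rho'})},
\]
with $C$ depending only on $N$ and the exponents, hence independent of $J$ and $f$. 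Combining the two bounds gives $\|v_t\|_{L^\infty(J,L^2)}\le \|f(0)\|_{L^2}+C\|f_t\|_{L^{\gamma'}(J,L^{\rho'})}$, and feeding this into the reduction above yields the stated inequality (the last term being governed by $f_t$, as the hypothesis $f_t\in L^{\gamma'}(J,L^{\rho'})$ dictates).

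The remaining membership claims follow from the same formula for $v_t$. Applying the inhomogeneous Strichartz estimate with an arbitrary admissible pair $(a,b)$ on the left shows $v_t\in L^a(J,L^b)$, i.e.\ $v\in W^{1,a}(J,L^b)$; taking $(a,b)=(\infty,2)$ together with the continuity in time of the free group and of the Duhamel operator gives $v\in C^1(J,L^2)$, and then $\Delta v=-iv_t-f\in L^\infty(J,L^2)$ upgrades $v$ to $L^\infty(J,H^2)$. The main technical obstacle is rigorously justifying the differentiation under the integral sign, since $f$ carries only the regularity $f\in L^\infty(J,L^2)$, $f_t\in L^{\gamma'}(J,L^{\rho'})$: I would handle this by a density argument, approximating $f$ by smooth functions $f_n$ for which the formula for $v_t$ is classical, establishing all estimates for $f_n$ with constants independent of $n$, and passing to the limit using the Strichartz bounds to control the error. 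This approximation is also what legitimizes the identity $iv_t+\Delta v=-f$ in the required functional setting.
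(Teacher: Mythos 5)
Your argument is correct and is essentially the standard proof of this result from Cazenave's book, which the paper simply cites without reproducing a proof: the identity $\Delta v=-iv_t-f$, the change of variables producing $v_t(t)=iU(t)f(0)+i\int_0^t U(t-s)f_t(s)\,ds$, and the inhomogeneous Strichartz estimate are exactly the ingredients of the original argument. Note also that you have implicitly corrected a typo in the paper's statement: the last term of the displayed inequality should read $C\|f_t\|_{L^{\gamma'}(J,L^{\rho'})}$ rather than $C\|f\|_{L^{\gamma'}(J,L^{\rho'})}$, consistent with the hypothesis $f_t\in L^{\gamma'}(J,L^{\rho'})$ and with your derivation.
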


\begin{lemma}
Let $u_0\in H^2$, $\phi \in H^1(0,T)$ and $V, \nabla V \in
L^p+L^{\infty}$ for some $p\geq 2$, $p>N/2$. Assume $0<\sigma
<\frac{2}{N-2}$ if $\lambda <0$ or $0<\sigma <\frac{2}{N}$ if
$\lambda >0$. Then the mild solution of \eqref{1.1} satisfies $u\in
L^\infty((0,T),H^2)$.
\end{lemma}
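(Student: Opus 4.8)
The plan is to bootstrap from the $H^1$ solution provided by Lemma 2.3 to $H^2$ regularity along the lines indicated in the remarks after Theorem 1.2: first gain one time derivative, $u_t\in L^\infty((0,T),L^2)$, and then convert it into two space derivatives through the Kato-type smoothing estimate of Lemma 2.4. Throughout I would use that for $N\le 3$ we have $H^2\hookrightarrow L^\infty$ and $H^1\hookrightarrow L^q$ for the relevant exponents, so that, writing $V=V_1+V_2$ with $V_1\in L^p$ and $V_2\in L^\infty$, multiplication by $V$ maps $H^1$ (resp. $H^2$) boundedly into $L^2$, and the source term $g:=\lambda|u|^{2\sigma}u+\phi Vu$ belongs to $C([0,T],L^2)$.

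The first and main step is to show $u_t\in L^\infty((0,T),L^2)$. Since $z\mapsto|z|^{2\sigma}z$ is in general only H\"older (not $C^1$) when $\sigma<1/2$, I would avoid differentiating the equation directly and instead work with the time-difference quotients $w_h(t):=h^{-1}(u(t+h)-u(t))$ on $[0,T-h]$. Subtracting the Duhamel formulas at $t+h$ and $t$, one finds $w_h(t)=U(t)w_h(0)+i\int_0^t U(t-s)G_h(s)\,ds$ with $G_h=\lambda h^{-1}(|u(\cdot+h)|^{2\sigma}u(\cdot+h)-|u|^{2\sigma}u)+\phi(\cdot+h)Vw_h+h^{-1}(\phi(\cdot+h)-\phi)Vu$. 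Using the elementary pointwise bound $||z|^{2\sigma}z-|w|^{2\sigma}w|\le C(|z|^{2\sigma}+|w|^{2\sigma})|z-w|$, valid for every $\sigma>0$ (which is exactly why no lower bound on $\sigma$ is needed here), the nonlinear difference is $\lesssim(|u(\cdot+h)|^{2\sigma}+|u|^{2\sigma})|w_h|$; together with $\phi(\cdot+h)Vw_h$ these are ``coefficient times $w_h$'' contributions that I would absorb by Strichartz estimates on short subintervals followed by a Gronwall argument, just as in the $H^1$ theory of Lemma 2.3 but now carried out in $L^2$. The genuinely inhomogeneous term is $h^{-1}(\phi(\cdot+h)-\phi)Vu$, for which I would use that $\phi\in H^1(0,T)$ makes $h^{-1}(\phi(\cdot+h)-\phi)$ bounded in $L^2(0,T)$ uniformly in $h$, and $Vu\in L^\infty((0,T),L^2)$, to bound this source in a fixed dual Strichartz norm uniformly in $h$. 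For the data, from $u(h)-u(0)=(U(h)-I)u_0+i\int_0^h U(h-s)g(s)\,ds$ with $u_0\in H^2=D(\Delta)$ and $g\in C([0,T],L^2)$, one gets $w_h(0)\to i\Delta u_0+g(0)=u_t(0)$ in $L^2$, so $\|w_h(0)\|_{L^2}$ is bounded uniformly in $h$. Combining these, $\|w_h\|_{L^\infty((0,T),L^2)}$ together with a Strichartz norm $\|w_h\|_{L^\gamma((0,T),L^\rho)}$ is bounded uniformly in $h$; passing to weak-$*$ limits gives $u_t\in L^\infty((0,T),L^2)\cap L^\gamma((0,T),L^\rho)$, and in particular the difference quotients of $g$ are bounded in $L^{\gamma'}((0,T),L^{\rho'})$, so $g_t\in L^{\gamma'}((0,T),L^{\rho'})$.

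The second step is routine given the first. I would apply Lemma 2.4 with $f=g=\lambda|u|^{2\sigma}u+\phi Vu$ on $J=(0,T)$ and the admissible pair $(\gamma,\rho)$ from Step~1: $g\in L^\infty((0,T),L^2)$ and $g(0)\in L^2$ by the embeddings above, while $g_t\in L^{\gamma'}((0,T),L^{\rho'})$ by Step~1. Lemma 2.4 then yields that the Duhamel term $i\int_0^t U(t-s)g(s)\,ds$ has Laplacian bounded in $L^\infty((0,T),L^2)$. Since $\Delta U(t)u_0=U(t)\Delta u_0$ with $\Delta u_0\in L^2$ and $U(t)$ isometric on $L^2$, the free part $U(t)u_0$ also lies in $L^\infty((0,T),H^2)$. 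Adding the two contributions in $u(t)=U(t)u_0+i\int_0^tU(t-s)g(s)\,ds$ gives $\Delta u\in L^\infty((0,T),L^2)$, hence $u\in L^\infty((0,T),H^2)$.

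The hard part is Step~1: closing the uniform-in-$h$ estimate for $w_h$ in the presence of the singular potential. Two points are delicate. First, one must choose a single admissible pair $(\gamma,\rho)$ for which all of $V_1w_h$, $V_1u_t$ and the nonlinear term $|u|^{2\sigma}w_h$ land simultaneously in the dual Strichartz space; this is precisely where the hypotheses $p\ge 2$, $p>N/2$ and the stated range of $\sigma$ enter. Second, one must verify that $h^{-1}(\phi(\cdot+h)-\phi)Vu$ is controlled uniformly in $h$, which is the essential use of $\phi\in H^1(0,T)$. Once the exponents are fixed, the remaining H\"older, Strichartz and Gronwall manipulations are standard and I would not carry them out in detail.
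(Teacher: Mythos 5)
Your two-step plan --- first obtain $u_t\in L^\infty((0,T),L^2)$ by estimating time-difference quotients $w_h$ (thereby avoiding differentiating the merely H\"older-continuous nonlinearity), then trade the time derivative for two space derivatives via Lemma 2.4 --- is exactly the Kato-type argument the paper invokes: its entire ``proof'' of this lemma is a citation of Remarks 5.3.3 and 5.3.5 in \cite{Ca2003}, whose underlying mechanism is the one you reconstruct, and the remark after Theorem 1.2 says as much. So in substance you are giving the proof the authors chose not to write out, and the skeleton is right.

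One step, as written, is circular and needs restructuring. You assert at the outset that $g=\lambda|u|^{2\sigma}u+\phi Vu$ lies in $C([0,T],L^2)$ ``by the embeddings above,'' invoking $H^2\hookrightarrow L^\infty$; but $H^2$ regularity of $u$ is the conclusion, and from the $H^1$ bound alone this can fail on part of the admitted parameter range. For instance, $\||u|^{2\sigma}u\|_{L^2}=\|u\|_{L^{2(2\sigma+1)}}^{2\sigma+1}$ is controlled by $\|u\|_{H^1}$ only when $2(2\sigma+1)\le \frac{2N}{N-2}$, i.e.\ $\sigma\le\frac{1}{N-2}$, which excludes $1<\sigma<2$ when $N=3$, $\lambda<0$; similarly $\|V_1u\|_{L^2}\le\|V_1\|_{L^p}\|u\|_{L^{2p/(p-2)}}$ requires $p\ge N$ to close with $H^1$ alone (for $N=3$, $p=2$ it needs $u\in L^\infty$). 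This affects both the hypothesis $f\in L^\infty(J,L^2)$ of Lemma 2.4 and your source-term bounds in Step 1. The standard repair, and the one Cazenave actually uses, is to run the whole argument on the maximal interval of the local $H^2$ solution (where $u\in L^\infty_{loc}H^2\hookrightarrow L^\infty$ for $N\le3$, so all these products are legitimate), derive the a priori bound $\|\Delta u\|_{L^2}\lesssim \|u_t\|_{L^2}+\|g\|_{L^2}$ with $\|g\|_{L^2}$ absorbed via the interpolation $\|u\|_{L^\infty}\le C\|u\|_{H^1}^{1-a}\|u\|_{H^2}^{a}$, $a<1$, and Young's inequality, and conclude that the $H^2$ norm cannot blow up before the (globally controlled) $H^1$ norm does. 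With that continuation structure in place, your difference-quotient estimate --- including the correct uniform $L^2$ bound on $h^{-1}(\phi(\cdot+h)-\phi)$ from $\phi\in H^1(0,T)$ --- goes through as you describe.
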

This lemma can be proved by applying Remarks 5.3.3 and 5.3.5 in
\cite{Ca2003}. When $0<\sigma <\frac{2}{N}$, for this lemma, it
suffices to require $V\in L^p+L^{\infty}$ for some $p\geq 1$,
$p>N/2$, see Remark 5.3.5 in \cite{Ca2003}.

\section{Existence of Minimizers}

Our goal in this section is to prove Theorem 1.1.
%\begin{proposition}
%Let $(u_n,\phi_n)_{n\in\mathbb{N}}$ be an minimizing sequence of the optimal control
%problem given by \eqref{3.2}. Then under the assumptions of Theorem
%2.1 there exist a subsequence, still denoted by
%$(u_n,\phi_n)_{n\in\mathbb{N}}$, and functions $u_*\in L^\infty
%((0,T),H^1)$, $\phi_*\in H^1(0,T)$, such that
%\[
%\phi_n\rightharpoonup \phi_* ~in ~H^1(0,T)~and~\phi_n\rightarrow
%\phi_* ~in ~L^2(0,T)~~as~n\rightarrow \infty.
%\]
%\end{proposition}

{\textbf {Proof of Theorem 1.1.}} The proof proceeds in three steps.

Step 1. Estimates of $(u_n,\phi_n)_{n\in\mathbb{N}}$. Let $\phi \in
H^1(0,T)$, there exists a unique mild solution $u\in C([0,T],H^1)$
of \eqref{1.1} by Lemma 2.3. Hence, the set $\Lambda(0,T)$ is
nonempty, and there exists a minimizing sequence
$(u_n,\phi_n)_{n\in\mathbb{N}}$ such that
\[
\lim_{n\rightarrow \infty}F(u_n,\phi_n)=F_*.
\]
We deduce from $\gamma_2>0$ that there exists a constant $C$ such
that for every $n\in\mathbb{N}$
\[
\int_0^T(\phi'_n(t))^2dt\leq C<+\infty.
\]
By using the embedding $H^1(0,T)\hookrightarrow C[0,T]$ and
$\phi_n(0)\in B_2$, we have
\[
\phi_n(t)=\phi_n(0)+\int_0^t\phi'_n(s)ds\leq
M_2+\left(T\int_0^T(\phi'_n(s))^2ds\right)^{1/2}<+\infty.
\]
This implies the sequence $(\phi_n)_{n\in\mathbb{N}}$ is bounded in
$L^\infty(0,T)$, so is in $H^1(0,T)$. Thus,
there exist a subsequence, which we still denote by
$(\phi_n)_{n\in\mathbb{N}}$, and $\phi_*\in H^1(0,T)$ such that
\begin{equation}\label{n0}
\phi_n\rightharpoonup \phi_*~~in ~~H^1(0,T)~~and ~~\phi_n
\rightarrow \phi_*~~in ~~L^2(0,T)~~as~~n\rightarrow \infty.
\end{equation}
On the other hand, we deduce from \eqref{1.3} and mass conservation
that
\[
\|E'_n\|_{L^2(0,T)}\leq
C\|\phi'_n\|_{L^2(0,T)}\|V\|_{L^\infty}\|u_0\|_{L^2}^2.
\]
 Using the same argument as
Lemma 2.3 and $u_n(0)\in B_2$, we derive
\begin{equation}\label{35}
\|u_n\|_{L^\infty ((0,T),H^1)}\leq C.
\end{equation}
Combining this estimate and the fact that $u_n$ is the solution of
\eqref{1.1}, we have
\begin{equation}\label{35'}
\|(u_n)_t\|_{L^{\infty }((0,T),H^{-1})}\leq C.
\end{equation}
%When $\lambda <0$, by similar argument as that of Proposition 2.2 in \cite{Sp}, we derive

%When $\lambda >0$, we can obtain \eqref{35} by applying Corollary 6.1.2 in \cite{Ca2003}.

Step 2. Passage to the limit. By applying \eqref{35}, \eqref{35'},
and Lemma 2.2, we deduce that there exist $u_*\in L^\infty
((0,T),H^1)\cap W^{1,\infty }((0,T),H^{-1})$ and a subsequence,
still denoted by $(u_n)_{n\in\mathbb{N}}$, such that, for all
$t\in[0,T]$,
\begin{equation}\label{31}
u_n(t)\rightharpoonup u_*(t)~~in ~H^1~as~n\rightarrow \infty.
\end{equation}

From the embedding $W^{1,\infty }((0,T),H^{-1})\hookrightarrow
C^{0,1}([0,T],H^{-1})$ (see \cite{Ca2003}, Remark 1.3.11) and the
inequality $\|u\|^2_{L^2}\leq \|u\|_{H^{1}}\|u\|_{H^{-1}}$, we
derive for every $u\in L^\infty ((0,T),H^1)\cap W^{1,\infty
}((0,T),H^{-1})$
\begin{equation}\label{l1}
\|u(t)-u(s)\|_{L^2}\leq C|t-s|^{\frac{1}{2}},~~for~all~t,s\in(0,T).
\end{equation}
Next, we note that for all $z_1,z_2 \in \mathbb{C}$, it holds
\begin{equation}\label{l2}
||z_1|^{2\sigma}z_1-|z_2|^{2\sigma} z_2|\leq
C(|z_1|^{2\sigma}+|z_2|^{2\sigma})|z_1-z_2|.
\end{equation}
It follows from \eqref{l0}, \eqref{35}, \eqref{l1}, \eqref{l2},
H\"{o}lder's inequality that
\begin{align}\label{l3}
\||u_n(t)|^{2\sigma} u_n(t)-|u_n(s)|^{2\sigma}
u_n(s)\|_{L^{r^\prime}}\leq & C( \|
u_n(t)\|_{L^r}^{2\sigma}+\|u_n(s)\|_{L^r}^{2\sigma})\|u_n(t)-u_n(s)\|_{L^r}\nonumber\\\leq&
C\|u_n(t)-u_n(s)\|_{L^2}^a\leq C|t-s|^{\frac{a}{2}},
\end{align}
where $r=2\sigma +2$ and $a=1-N(\frac{1}{2}-\frac{1}{2\sigma +2})$.
This implies $(|u_n|^{2\sigma} u_n)_{n\in\mathbb{N}}$ is a bounded
sequence in $C^{0,\frac{a}{2}}([0,T],L^{r^\prime})$. Therefore, we
deduce from Lemma 2.1 that there exist a subsequence, still denoted
by $(|u_n|^{2\sigma} u_n)_{n\in\mathbb{N}}$, and $f\in
C^{0,\frac{a}{2}}([0,T],L^{r^\prime})$ such that, for all
$t\in[0,T]$,
\begin{equation}\label{32}
|u_n(t)|^{2\sigma} u_n(t)\rightharpoonup f(t)~~in
~L^{r^\prime}~as~n\rightarrow \infty.
\end{equation}
On the other hand, it follows from $(u_n,\phi_n)\in \Lambda(0,T)$
that for every $\omega \in C_c^\infty (\mathbb{R}^N)$ and for every $\eta \in
\mathcal{D}(0,T)$,
\[
\int_0^T[-\langle iu_n,\omega \rangle_{H^{-1},H^1_0}\eta'(t)+
\langle \Delta u_n+|u_n|^{2\sigma} u_n+\phi _n(t)Vu_n,\omega
\rangle_{H^{-1},H^1_0}\eta(t)]dt=0.
\]
Applying \eqref{n0}, \eqref{31}, \eqref{32}, and the dominated
convergence theorem, we deduce easily that
\[
\int_0^T[-\langle iu_*,\omega \rangle_{H^{-1},H^1_0}\eta'(t)+\langle \Delta u_*+f+\phi _*(t)Vu_*,\omega \rangle_{H^{-1},H^1_0}\eta(t)]dt=0.
\]
This implies that $u_*$ satisfies
\begin{equation}\label{36}
i\frac{d}{dt}u_* + \Delta u_*+f+\phi _*(t)Vu_*=0~~for~a.e.~t \in
[0,T].
\end{equation}
We next show $|u_*(t,x)|^{2\sigma}u_*(t,x)=f(t,x)$ for a.e.
$(t,x)\in [0,T]\times \mathbb{R}^N$.  It suffices to show that for any given
$t\in [0,T]$ 
\begin{equation}\label{38}
\int_{\mathbb{R}^N} |u_*(t,x)|^{2\sigma}(x)u_*(t,x)\varphi
(x)dx=\int_{\mathbb{R}^N}f(t,x)\varphi (x)dx~~for ~any~\varphi \in
C_c^\infty (\mathbb{R}^N).
\end{equation}
 Let us prove \eqref{38}
by contradiction. If not, there exists $\varphi_0 \in C_c^\infty
(\mathbb{R}^N)$ such that
\begin{equation}\label{39}
\int_{\mathbb{R}^N} |u_*(t,x)|^{2\sigma}(x)u_*(t,x)\varphi _0(x)dx\neq
\int_{\mathbb{R}^N}f(t,x)\varphi_0 (x)dx.
\end{equation}
It follows from \eqref{32} that
\begin{equation}\label{40}
\int_{\mathbb{R}^N} |u_n(t,x)|^{2\sigma}(x)u_n(t,x)\varphi _0(x)dx\rightarrow
\int_{\mathbb{R}^N}f(t,x)
 \varphi_0 (x)dx ~~as~n\rightarrow \infty.
\end{equation}
On the other hand, we deduce from \eqref{31} that there exists a
subsequence, still denoted by $(u_n(t))_{n\in\mathbb{N}}$ such that
$u_n(t)\rightarrow u_*(t)$ in $L^{2\sigma +2}_{loc}(\mathbb{R}^N)$ and
$|u_n(t)|^{2\sigma}\rightarrow |u_*(t)|^{2\sigma}$ in $L^{\frac{2\sigma
+2}{2\sigma}}_{loc}(\mathbb{R}^N)$. Combining this, \eqref{35} and
\eqref{31}, we derive
\begin{align}\label{41}
&\left|\int_{\mathbb{R}^N} |u_n(t,x)|^{2\sigma}u_n(t,x)\varphi
_0(x)dx-\int_{\mathbb{R}^N}|u_*(t,x)|^{2\sigma}u_*(t,x)
 \varphi_0 (x)dx\right|\nonumber\\
  \leq &\|u_n(t)\|_{L^{2\sigma +2}}^{2\sigma}\|u_n(t)-u_*(t)\|_{L^{2\sigma +2}(\Omega)}
  \|\varphi_0\|_{L^{2\sigma +2}}+
\|u_*(t)\|_{L^{2\sigma +2}}\|u_n(t)-u_*(t)\|_{L^{\frac{2\sigma
+2}{2\sigma}}(\Omega)}\|\varphi_0\|_{L^{2\sigma +2}}\nonumber\\
&\xrightarrow {n\rightarrow \infty} 0 ,
\end{align}
where $\Omega $ is the compact support of $\varphi_0$. This is a
contradiction with \eqref{39} and \eqref{40}.

In summary, $u_*\in L^\infty ((0,T),H^1)\cap W^{1,\infty
}((0,T),H^{-1})$ and satisfies
\[
i\frac{d}{dt}u_* + \Delta u_*+\lambda |u_*|^{2\sigma}u_*+\phi
_*(t)Vu_*=0, ~~for~a.e.~t\in [0,T].
\]
By using the classical argument based on Strichartz's estimate, we
can obtain the uniqueness of the weak solution $u_*$ of \eqref{1.1}.
Arguing as the proof of Theorem 3.3.9 in \cite{Ca2003}, it follows
that $u_*$ is indeed a mild solution of \eqref{1.1} and $u_*\in
C((0,T),H^1)\cap C^1((0,T),H^{-1})$.

Step 3. Conclusion. In order to conclude that the pair
$(u_*,\phi_*)\in \Lambda(0,T)$ is indeed a minimizer of optimal
control problem \eqref{3.2}, we need only show
\begin{equation}\label{310}
F_*=\lim_{n\rightarrow \infty}F(u_n,\phi_n)\geq F(u_*,\phi_*).
\end{equation}
Indeed, in view of the assumption on operator $A$, there exists
$R>0$, such that for every $n \in \mathbb{N}$, $supp_{x\in
\mathbb{R}^3}(Au(T,x))\subseteq B(R)$. Therefore, we deduce from
$u_n(T)\rightarrow u_*(T)$ in $L^2_{loc}$ and
$Au_n(T)\rightharpoonup Au_*(T)$ in $L^2$ that
\begin{align}\label{311}
&|\langle u_n(T), Au_n(T)\rangle-\langle u_*(T), Au_*(T)\rangle | \nonumber\\
\leq &|\langle u_n(T)-u_*(T), Au_n(T)\rangle | + |\langle u_*(T), A(
u_n(T)-u_*(T))\rangle |\rightarrow 0~~as~n\rightarrow \infty.
\end{align}
The same argument as Lemma 2.5 in \cite{Sp}, we have
\begin{equation}\label{312}
\liminf_{n\rightarrow
\infty}\int_0^T(\phi_n'(t))^2\omega_n^2(t)dt\geq
\int_0^T(\phi'_*(t))^2\omega_*^2(t)dt,
\end{equation}
where
\[
\omega_n(t)=\int_{\mathbb{R}^3 }V(x)|u_n(t,x)|^2dx,
~~\omega_*(t)=\int_{\mathbb{R}^3 }V(x)|u_*(t,x)|^2dx.
\]
It follows from the weak lower semicontinuity of the norm that
\begin{equation}\label{313}
\liminf_{n\rightarrow \infty}\int_0^T(\phi_n'(t))^2dt\geq
\int_0^T(\phi'_*(t))^2dt.
\end{equation}
Collecting \eqref{311}-\eqref{313}, we derive \eqref{310}. This
completes the proof.

\section{Rigorous characterization of a minimizer}
In order to obtain a rigorous characterization of a minimizer
$(u_*,\phi_*)\in \Lambda(0,T)$, we need to derive the first order
optimality conditions for our optimal control problem \eqref{3.2}.
For this aim, we firstly formally calculate the derivative of the
objective functional $F(u,\phi )$ and analyze the resulting adjoint
problem in the next subsection.

\subsection{Derivation and analysis of the adjoint equation.}

To begin with, we rewrite equation \eqref{1.1} in a more abstract
form, i.e.,
%\begin{equation}\label{401}
%\langle \mathcal{F}'(\alpha),\alpha_0\rangle=\langle \partial_u F(u(\alpha),\alpha),u'(\alpha)\alpha_0\rangle
%+\langle \partial_\alpha F(u(\alpha),\alpha),\alpha_0\rangle
%\end{equation}
\begin{equation}\label{402}
P(u,\phi )=iu_{t}+\Delta u+\lambda  |u|^{2\sigma}u+ \phi (
t)V(x)u=0.
\end{equation}
Thus, formally computation yields
\[
\partial_uP(u,\phi)\varphi =i\varphi_t+\Delta \varphi +\phi(t)V(x)\varphi
+\lambda (\sigma +1)|u|^{2\sigma}\varphi +\lambda
|u|^{2\sigma-2}u^2\bar{\varphi},
\]
where $\varphi \in L^2$. Similarly, we have
\[
\partial_\phi P(u,\phi) =V(x)u.
\]
The analogue argument as Section 3.1 in \cite{Sp}, we can derive the
following adjoint equation:
\begin{equation}\label{eq1}
\left\{
\begin{array}{l}
i\varphi_t+\Delta \varphi +\phi(t)V(x)\varphi +\lambda (\sigma
+1)|u|^{2\sigma}\varphi +\lambda |u|^{2\sigma-2}u^2\bar{\varphi}
 =\frac{\delta F(u,\phi)}{\delta u(t)}, \\
\varphi(T)=i\frac{\delta F(u,\phi)}{\delta u(T)},%
\end{array}%
\right.
\end{equation}
where $\frac{\delta F(u,\phi)}{\delta u(t)}$ and $\frac{\delta
F(u,\phi)}{\delta u(T)}$ denote the first variation of $F(u,\phi)$
with respect to $u(t)$ and $u(T)$ respectively. By straightforward
computations, we have
\begin{align}\label{e1}
\frac{\delta F(u,\phi)}{\delta u(t)}=&4\gamma
_1(\phi'(t))^2(\int_{\mathbb{R}^N}V(x)|u(t,x)|^2dx)V(x)u(t,x)
\nonumber\\
=& 4\gamma _1(\phi'(t))^2\omega(t)V(x)u(t,x),
\end{align}
 and
\begin{equation}\label{e2}
\frac{\delta F(u,\phi)}{\delta u(T)}=4\langle
u(T),Au(T)\rangle_{L^2} Au(T).
\end{equation}
Thus, equation \eqref{eq1} defines a Cauchy problem for $\varphi$
with data $\varphi(T)\in L^2$, one can solve \eqref{eq1} backwards
in time.

In the following proposition, we will analyze the existence of
solutions to \eqref{eq1}.
\begin{proposition}
Let $N\leq 3$, $u_0\in H^2$ and $V, \nabla V \in L^p+L^{\infty}$ for
some $p\geq 2$. Assume $0<\sigma <\frac{2}{N-2}$ if $\lambda <0$ or
$0<\sigma <\frac{2}{N}$ if $\lambda >0$. Then, for every $T>0$,
equation \eqref{eq1} admits a unique mild solution $\varphi \in
C([0,T],L^2)$.
\end{proposition}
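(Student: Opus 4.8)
The plan is to solve the backward Cauchy problem \eqref{eq1} by a fixed-point argument based on Strichartz estimates, exactly as one treats the forward NLS, after first verifying that all the inhomogeneous and potential terms are admissible perturbations in $L^2$. First I would reverse time by setting $\psi(t,x)=\varphi(T-t,x)$, turning \eqref{eq1} into a forward-in-time problem with initial datum $\psi(0)=i\frac{\delta F}{\delta u(T)}=4i\langle u(T),Au(T)\rangle_{L^2}Au(T)\in L^2$; here one uses that $A:H^1\to L^2$ is bounded and that $u(T)\in H^1$ (indeed $u\in L^\infty((0,T),H^2)$ by Lemma 2.5 under the stated hypotheses), so the datum genuinely lies in $L^2$. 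The associated Duhamel/mild formulation is
\[
\varphi(t)=U(T-t)\,\varphi(T)+i\int_t^T U(t-s)\bigl[\phi(s)V\varphi+\lambda(\sigma+1)|u|^{2\sigma}\varphi+\lambda|u|^{2\sigma-2}u^2\bar\varphi-\tfrac{\delta F}{\delta u(s)}\bigr]ds,
\]
and the goal is to produce a unique fixed point in $C([0,T],L^2)$.

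The key steps are to estimate each term in the bracket so that the Duhamel map is a contraction on a suitable Strichartz space $C([0,T],L^2)\cap L^q((0,T),L^r)$ for an admissible pair $(q,r)$. For the \emph{linear} potential term $\phi(s)V\varphi$, I would split $V=V_1+V_2\in L^p+L^\infty$; the $V_2$ part is controlled by $\|\phi\|_{L^\infty}\|V_2\|_{L^\infty}\|\varphi\|_{L^2}$, while the $V_1\in L^p$ part, combined with $\phi\in H^1(0,T)\hookrightarrow L^\infty(0,T)$ and the Sobolev/H\"older inequalities, is an admissible $L^2$-perturbation since $p>N/2$ (for $N\le3$ this gives the needed integrability exponents). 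For the two \emph{nonlinear} coefficient terms, the factors $|u|^{2\sigma}$ and $|u|^{2\sigma-2}u^2$ are bounded in the relevant $L^\rho$ norms because $u\in L^\infty((0,T),H^2)\hookrightarrow L^\infty((0,T),L^\infty)$ for $N\le3$; hence these act as bounded time-dependent potentials multiplying $\varphi$ (or $\bar\varphi$), again an admissible $L^2$-perturbation. Finally the source terms: $\frac{\delta F}{\delta u(s)}=4\gamma_1(\phi'(s))^2\omega(s)V\,u(s)$ from \eqref{e1} is estimated in an appropriate dual Strichartz norm using $\phi'\in L^2(0,T)$, the boundedness of $\omega$, and $Vu\in L^2$ uniformly (via $\|Vu\|_{L^2}\le\|V\|_{L^p}\|u\|_{L^{2p/(p-2)}}$ together with $u\in H^2$). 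Collecting these, the contraction constant can be made small on a short interval, and the global bound follows by iterating on consecutive subintervals since the $L^2$ norm of $\varphi$ is controlled by the linear estimate plus a fixed source, with no blow-up of the contraction constant; uniqueness is immediate from the same contraction.

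The main obstacle I expect is handling the singular unbounded potential $V$ in the coefficient $\phi(t)V\varphi$ at the low regularity $\varphi\in L^2$: one cannot absorb $V\varphi$ directly in $L^2$, so the Strichartz machinery must be used carefully to trade the $L^p$-singularity of $V_1$ against the space integrability gained from the admissible pair, and this is precisely where the hypothesis $p>N/2$ with $N\le3$ is essential. The nonlinear coefficient terms, though formally of degree $2\sigma$, are in fact \emph{linear} in $\varphi$ here (the nonlinearity has already been linearized), so the real work is the potential estimate rather than any genuinely nonlinear fixed-point balancing; I would therefore devote most of the care to verifying that $\phi V\varphi$ and the source $\frac{\delta F}{\delta u(s)}$ land in the correct dual Strichartz spaces uniformly on $[0,T]$. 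Since the structure of the argument parallels Section 3.1 of \cite{Sp} and the well-posedness theory in \cite{Ca2003}, I would follow those references for the routine Strichartz bookkeeping and focus the exposition on the $V$-dependent estimates that are new to the unbounded-potential setting.
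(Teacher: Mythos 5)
Your plan is sound and, at bottom, runs on the same engine as the paper's proof: both reduce the problem to (i) checking that the linearized coefficients $|u|^{2\sigma}$, $|u|^{2\sigma-2}u^2$ are in $L^\infty$ via $u\in L^\infty((0,T),H^2)\hookrightarrow L^\infty$ (Lemma 2.5, $N\le 3$), that the terminal datum lies in $L^2$, and that the source $\frac{\delta F}{\delta u(t)}$ lies in a dual Strichartz class (the paper uses $L^1((0,T),L^{2p/(p+1)})$, you use $L^1((0,T),L^2)$ --- both work); and (ii) an $L^2$-level Strichartz fixed point. The difference is presentational: the paper outsources step (ii) to Theorem 4.6.4 and Corollary 4.6.5 of Cazenave, whereas you carry out the contraction explicitly. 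One point deserves comment, because the paper states that the unbounded $V$ ``cannot be treated as a perturbation'' while you propose to treat $\phi V\varphi$ perturbatively: your mechanism is nevertheless correct, and is in fact what the cited theorem does internally. What fails for unbounded $V$ is only the naive argument viewing $V$ as a bounded operator on $L^2$ plus Gronwall; the Strichartz trade you describe --- placing $V_1\in L^p$ against the admissible pair $\rho=\frac{2p}{p-1}$, $\gamma=\frac{4p}{N}$, with the time--H\"older exponent $1-\frac{2}{\gamma}=1-\frac{N}{2p}>0$ exactly when $p>N/2$ (automatic here since $p\ge 2$ and $N\le 3$) --- is valid at $L^2$ regularity and gives a contraction on intervals whose length depends only on the fixed norms of $V$, $\phi$ and $u$. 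Your remark that globality is then free because the adjoint equation is linear in $\varphi$ is cleaner than the paper's appeal to Gronwall. Two trivia: the backward Duhamel formula should read $U(t-T)\varphi(T)$ rather than $U(T-t)\varphi(T)$, and the extra spatial integrability of $\varphi$ comes from the Strichartz norm $L^\gamma_tL^\rho_x$, not from any Sobolev embedding (there is none available at $L^2$ regularity); neither affects the substance.
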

\begin{proof}
Under our assumptions on $V$, $u_0$, and $A$, we deduce from
$H^2\hookrightarrow L^\infty$ and Lemma 2.5 that $|u|^{2\sigma}$,
$|u|^{2\sigma-2}u^2 \in L^\infty$, $\frac{\delta F(u,\phi)}{\delta
u(t)} \in L^1((0,T),L^{\frac{2p}{p+1}})$, $\frac{\delta
F(u,\phi)}{\delta u(T)}\in L^2$. Since $V$ is an unbounded
potential, it cannot be treated as a perturbation. Applying
consequently Theorem 4.6.4 and Corollary 4.6.5 in \cite{Ca2003}, we
can obtain the local well-posedness. The global existence can be
derived by the classical argument for Schr\"{o}dinger equations and
Gronwall's inequality.
\end{proof}

\subsection{Lipschitz continuity with respect to the control.}
This subsection is devoted to derive the solution of \eqref{1.1}
depends Lipschitz continuously on the control parameter $\phi$,
which is vital for investigating the differentiability of
unconstrained functional $\mathcal{F}$. To begin with, we study the
continuous dependence of the solutions $u=u(\phi)$ with respect to
the control parameter $\phi$. Our result is as follows.

\begin{proposition}
Let $N\leq 3$, $V, \nabla V\in L^p +L^{\infty}$ and $V\in L^{2p}$
for some $p\geq 2$.  Assume $0<\sigma <\frac{2}{N-2}$ if $\lambda
<0$ or $0<\sigma <\frac{2}{N}$ if $\lambda >0$. Let $u,\tilde{u}\in
L^\infty ((0,T),H^2)$ be two mild solutions of \eqref{1.1} with the
same initial data $u_0\in H^2$, corresponding to control parameters
$\phi,\tilde{\phi}\in H^1(0,T)$ respectively. Given a constant
$M>0$, if
\[
\|\phi\|_{H^1(0,T)},~\|\tilde{\phi}\|_{H^1(0,T)},~
\|u(t)\|_{H^2},~\|\tilde{u}(t)\|_{H^2}\leq M,
\]
then, there exist $\tau =\tau(M)>0$ and a constant $C=C(M)$ such
that
\begin{equation}\label{46}
\|u-\tilde{u}\|_{L^\infty(I_t,H^2)}\leq
C(\|u(t)-\tilde{u}(t)\|_{H^2}+\|\phi-\tilde{\phi}\|_{L^2(I_t)}),
\end{equation}
where $I_t:=[t,t+\tau]\cap[0,T]$. In particular, the solution $
u(\phi)$ depends continuously on control parameter $\phi \in
H^1(0,T)$.
\end{proposition}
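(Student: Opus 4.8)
The plan is to work with the difference $w:=u-\tilde u$ and the difference of controls $\Phi:=\phi-\tilde\phi$, estimate $w$ on a short interval $I_t=[t,t+\tau]\cap[0,T]$ in the norm $L^\infty(I_t,H^2)$, and close the estimate by choosing $\tau=\tau(M)$ small and invoking Gronwall's inequality. Subtracting the two copies of \eqref{1.1} and using the algebraic splitting
\[
\phi Vu-\tilde\phi V\tilde u=\phi V w+\Phi V\tilde u,
\]
one finds that $w$ solves
\[
iw_t+\Delta w+\lambda\big(|u|^{2\sigma}u-|\tilde u|^{2\sigma}\tilde u\big)+\phi V w+\Phi V\tilde u=0,
\]
with data $w(t)=u(t)-\tilde u(t)$ at the left endpoint. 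The point to keep in mind is that $\Phi V\tilde u$ is the only term not carrying a factor of $w$, so it is the term that must produce the control norm on the right-hand side of \eqref{46}, while every other term will be shown to be either absorbable (a factor of $w$ or $w_t$ times $C(M)\tau^{\theta}$) or lower order.

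First I would record the structural facts that make all products finite. Since $N\le 3$ we have $H^2\hookrightarrow L^\infty$, so $u,\tilde u$ are bounded on $I_t$ with norms controlled by $M$; combined with \eqref{l2} this yields $\||u|^{2\sigma}u-|\tilde u|^{2\sigma}\tilde u\|_{L^2}\le C(M)\|w\|_{L^2}$ for the nonlinear difference, together with a corresponding bound for its time derivative. The hypothesis $V\in L^{2p}$ places $V\tilde u$ and $V\tilde u_t$ in $L^2$ (using $H^2\hookrightarrow L^{2p/(p-1)}$ and the bound $\tilde u_t\in L^\infty(I_t,L^2)$ read off from the equation for $\tilde u$), while $V,\nabla V\in L^p+L^\infty$ control the first-order potential products. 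With these inputs a standard inhomogeneous Strichartz estimate on the Duhamel representation of $w$ gives the zeroth-order bound $\|w\|_{L^\infty(I_t,L^2)}\le C\|w(t)\|_{L^2}+C(M)\tau^{\theta}\|w\|_{L^\infty(I_t,H^1)}+C(M)\|\Phi\|_{L^2(I_t)}$.

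The heart of the matter is the second-order estimate, and here I would follow the device flagged in the paper: rather than applying $\Delta$ to the equation, which would require two derivatives of the singular potential and hence an uncontrolled $\Delta V$, I trade one time derivative for two space derivatives via Lemma 2.4. Writing $w$ through Duhamel's formula with propagator $U(\cdot)$ and applying Lemma 2.4 to the forcing $f:=\lambda(|u|^{2\sigma}u-|\tilde u|^{2\sigma}\tilde u)+\phi V w+\Phi V\tilde u$ gives, for a conveniently chosen admissible pair $(\gamma,\rho)$,
\[
\|\Delta w\|_{L^\infty(I_t,L^2)}\le \|\Delta w(t)\|_{L^2}+\|f\|_{L^\infty(I_t,L^2)}+\|f(t)\|_{L^2}+C\|f_t\|_{L^{\gamma'}(I_t,L^{\rho'})}.
\]
The time derivative $f_t$ produces $\phi'Vw+\phi Vw_t$ from the potential term and $\Phi'V\tilde u+\Phi V\tilde u_t$ from the source; the dangerous factor $Vw_t$ is placed in $L^{\rho'}$ by taking $\tfrac1{\rho'}=\tfrac1{2p}+\tfrac12$ and using $V\in L^{2p}$, $w_t\in L^2$, which is precisely why $V\in L^{2p}$ (and not merely $L^p$) is assumed. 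Every term containing $w$ or $w_t$ carries a positive power $\tau^{\theta}$ of the interval length, so after combining with the zeroth-order estimate and choosing $\tau=\tau(M)$ small enough that $C(M)\tau^{\theta}\le\tfrac12$, all such terms are absorbed into the left-hand side; a Gronwall step then closes the bound in $\|w\|_{L^\infty(I_t,H^2)}$.

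Collecting the pieces yields \eqref{46}, and the final continuity assertion follows by taking $u_0$ fixed so that $\|w(0)\|_{H^2}=0$, applying \eqref{46} on $[0,\tau]$, propagating the resulting smallness of $\|w(\tau)\|_{H^2}$ to the next interval, and iterating over the $\lceil T/\tau\rceil$ subintervals. I expect the main obstacle to be the top-order control against the singular potential: the naive $H^2$ energy estimate is simply unavailable because nothing controls $\Delta V$, so every appearance of $V$ at second order must be routed either through the time-derivative device (so that only $V,\nabla V$ enter) or through the integrability $V\in L^{2p}$. The genuinely delicate point within this is the source term $\Phi V\tilde u$: its top-order contribution enters through $\|f\|_{L^\infty(I_t,L^2)}$ and through $\Phi'V\tilde u$ in $f_t$, and there is a real tension here — integrating by parts in time to remove $\Phi'$ reintroduces $\Delta(V\tilde u)$, while leaving it in place threatens to replace $\|\Phi\|_{L^2(I_t)}$ by $\|\Phi\|_{H^1(I_t)}$. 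Keeping the dependence as stated requires exploiting $V\tilde u,V\tilde u_t\in L^\infty(I_t,L^2)$ so that $\Phi$ is always paired in $L^2$, and I would flag that even the weaker $H^1(I_t)$ dependence would already suffice for the continuity consequence drawn above.
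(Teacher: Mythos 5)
Your proposal follows essentially the same route as the paper: Duhamel plus Strichartz for the $L^2$-level bound, then Lemma 2.4 (trading one time derivative for two space derivatives) applied to the forcing $f_1+f_2$ so that only $V,\nabla V$ and $V\in L^{2p}$ ever enter at top order, with every $w$- or $w_t$-carrying term gaining a positive power of $\tau$ and being absorbed for $\tau=\tau(M)$ small. Your worry about the term $(\tilde\phi'-\phi')V\tilde u$ is well placed: the paper's own chain of estimates \eqref{f2}, \eqref{f4}, \eqref{f8}, \eqref{f9} in fact delivers $\|\tilde\phi-\phi\|_{H^1(I_t)}$ rather than the $L^2(I_t)$ norm claimed in \eqref{46}, which, as you observe, is still sufficient for the continuity assertion and for the Lipschitz estimate of Proposition 4.3.
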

\begin{proof}
To simplify notation, let us assume $t+\tau \leq T$. Applying Lemmas
2.3 and 2.4, there is a $\tau
>0$ depending only on $M$, such that $u|_{I_t}$ is a fixed point of
the operator
\[
\Phi(u):=U(\cdot-t)u(t)+i\int_t^\cdot U(\cdot-s)(\lambda
|u(s)|^{2\sigma}u(s)+ \phi(s)Vu(s))ds,
\]
which maps the Banach space
\[
Y=\{u\in L^\infty (I_t,H^2), ~~\|u\|_{L^\infty (I_t,H^2)}\leq 2M \}
\]
into itself. The same holds for $\tilde{u}$, we consequently derive
\begin{align}\label{f1}
\tilde{u}(s)-&u(s)=U(s-t)(\tilde{u}(t)-u(t))\nonumber\\
+ &i\int_t^{s} U(s-r)(\lambda
(|\tilde{u}|^{2\sigma}\tilde{u}-|u|^{2\sigma}u)+
V(\tilde{u}\tilde{\phi}-u\phi))(r)dr
\end{align}
where $s\in [t,t+\tau]$. In the following, we set $r=2\sigma +2$ and
$\rho =\frac{2p}{p-1}$, taking $q$ and $\gamma$ such that $(q,r)$
and $(\gamma,\rho)$ are two admissible pairs. Applying Strichartz's
estimate to \eqref{f1}, the embedding theorems $\tilde{\phi}, \phi
\in H^1(0,T)\hookrightarrow L^\infty (0,T)$ and $\tilde{u}(t,\cdot),
u(t,\cdot) \in H^2(\mathbb{R}^{N})\hookrightarrow
L^\infty(\mathbb{R}^{N})$ when $N\leq 3$, H\"{o}lder's inequality,
\eqref{l2}, we derive
\begin{align}\label{f2}
&\|\tilde{u}-u\|_{L^\infty (I_t,L^2)}\nonumber\\\leq&
C\|\tilde{u}(t)-u(t)\|_{L^2}+
C\||\tilde{u}|^{2\sigma}\tilde{u}-|u|^{2\sigma}u\|_{L^{q^\prime}(I_t,L^{r^\prime})}+
C\|V(\tilde{u}\tilde{\phi}-u\phi))\|_{L^{\gamma^\prime}(I_t,L^{\rho^\prime})}\nonumber\\
\leq & C\|\tilde{u}(t)-u(t)\|_{L^2}+C\tau^{\frac{q-q^\prime}{qq^\prime}}\left(\|\tilde{u}\|^{2\sigma}_{L^\infty(I_t,L^r)}+C\|u\|^{2\sigma}_{L^\infty(I_t,L^r)}\right)
\|\tilde{u}-u\|_{L^q(I_t,L^r)}\nonumber\\
&+C\|V\|_{L^p}\|\tilde{\phi}\|_{L^\infty}\|\tilde{u}-u\|_{L^{\gamma^\prime}(I_t,L^{\rho})}
+\|V\|_{L^p}\|\tilde{\phi}-\phi\|_{L^{\gamma^\prime}(I_t)}\|u\|_{L^{\infty}(I_t,L^{\rho})}\nonumber\\
\leq
&C\|\tilde{u}(t)-u(t)\|_{L^2}+C\tau^{\frac{1}{q^\prime}}\|\tilde{u}-u\|_{L^\infty(I_t,H^2)}
+C\tau^{\frac{1}{\gamma^\prime}}\|\tilde{u}-u\|_{L^\infty(I_t,H^2)}+C\|\tilde{\phi}-\phi\|_{H^1(I_t)}.
\end{align}
Set $f_1(t)=\lambda (
|\tilde{u}|^{2\sigma}\tilde{u}-|u|^{2\sigma}u)(t)$ and $f_2(t)=
V(\tilde{u}\tilde{\phi}-u\phi)(t)$, we deduce from Lemma 2.4 that
\begin{align}\label{f3}
\|\Delta (\tilde{u}-u)\|_{L^\infty(I_t,L^2)}\leq & \|\Delta
(\tilde{u}(t)-u(t))\|_{L^2}+C\|f_1(t)+f_2(t)\|_{L^2}+\|f_1+f_2
\|_{L^\infty(I_t,L^2)}\nonumber\\&+C\|(f_1)_t\|_{L^{q^\prime}(I_t,L^{r^\prime})}
+C\|(f_2)_t\|_{L^{\gamma^\prime}(I_t,L^{\rho^\prime})}.
\end{align}
Let us estimate these terms.
By the similar argument as \eqref{f2}, we obtain
\begin{align}\label{f4}
\|f_1(t)+f_2(t)\|_{L^2}\leq& C\|(|\tilde{u}|^{2\sigma}+|u|^{2\sigma})(t)(\tilde{u}-u)(t)\|_{L^2}+\|V\tilde{\phi}(t)(\tilde{u}-u)(t)\|_{L^2}+
\|Vu(t)(\tilde{\phi}-\phi)(t)\|_{L^2}\nonumber\\
\leq & C\|\tilde{u}(t)-u(t)\|_{L^2}+\tilde{\phi}(t)\|V\|_{L^p}\|(\tilde{u}-u)(t)\|_{L^{\frac{2p}{p-2}}}+
|(\tilde{\phi}-\phi)(t)|\|Vu(t)\|_{L^2}\nonumber\\
\leq &C\|\tilde{u}(t)-u(t)\|_{L^2}+C\|\tilde{u}(t)-u(t)\|_{H^2}+C
\|\tilde{\phi}-\phi\|_{H^1(I_t)};
\end{align}
When $2<p<\infty$, $2<\frac{2p}{p-2}<\infty$, we deduce from
interpolation inequality and Young's inequality with $\varepsilon$
that
\begin{align}\label{f5}
&\|f_1+f_2\|_{L^\infty (I_t,L^2)}\nonumber\\ \leq&
C\|(|\tilde{u}|^{2\sigma}+|u|^{2\sigma})(\tilde{u}-u)\|_{L^\infty
(I_t,L^2)}+\|V\tilde{\phi}(\tilde{u}-u)\|_{L^\infty (I_t,L^2)}+
\|Vu(\tilde{\phi}-\phi)\|_{L^\infty (I_t,L^2)}\nonumber\\
\leq & C\|\tilde{u}-u\|_{L^\infty
(I_t,L^2)}+C\|V\|_{L^p}\|\tilde{u}-u\|_{L^\infty(I_t,L^{\frac{2p}{p-2}})}+
\|\tilde{\phi}-\phi\|_{H^1(I_t)}\|V\|_{L^p}\|u\|_{L^\infty (I_t,H^2)}\nonumber\\
\leq & C\|\tilde{u}-u\|_{L^\infty (I_t,L^2)}+\varepsilon
\|\tilde{u}-u\|_{L^\infty (I_t,H^2)}+
C\|\tilde{\phi}-\phi\|_{H^1(I_t)}.
\end{align}
When $p=2$, by the similar argument as above, we have
\begin{align}
\|V\tilde{\phi}(\tilde{u}-u)\|_{L^\infty (I_t,L^2)}\leq &
\|V\|_{L^4}\|\tilde{\phi}(\tilde{u}-u)\|_{L^\infty (I_t,L^4)}\nonumber\\
\leq & C\|\tilde{u}-u\|_{L^\infty (I_t,L^2)}+\varepsilon
\|\tilde{u}-u\|_{L^\infty (I_t,H^2)};
\end{align}
When $p=\infty$,
\begin{equation}
\|V\tilde{\phi}(\tilde{u}-u)\|_{L^\infty (I_t,L^2)}\leq
\|V\|_{L^\infty}\|\tilde{\phi}(\tilde{u}-u)\|_{L^\infty (I_t,L^2)}.
\end{equation}

 After some fundamental computations, by the similar argument as
\eqref{f2}, we obtain
\begin{align}\label{f6}
\|(f_1)_t\|_{L^{q^\prime} (I_t,L^{r^\prime})}\leq &
C\||\tilde{u}|^{2\sigma}\tilde{u}_t-|u|^{2\sigma}u_t\|_{L^{q^\prime}
(I_t,L^{r^\prime})}+\||\tilde{u}|^{2\sigma-2}\bar{\tilde{u}}_t
\tilde{u}^2-|u|^{2\sigma-2}\bar{u}_tu^2\|_{L^{q^\prime}
(I_t,L^{r^\prime})}\nonumber\\\leq & C
\|\tilde{u}_t-u_t\|_{L^{q^\prime}(I_t,L^2)}+\||\tilde{u}|^{2\sigma-2}
\tilde{u}^2(\bar{\tilde{u}}_t-\bar{u}_t)\|_{L^{q^\prime}
(I_t,L^{r^\prime})}\nonumber\\ &+\|(|\tilde{u}|^{2\sigma-2}
\tilde{u}^2-|u|^{2\sigma-2}u^2)\bar{u}_t\|_{L^{q^\prime}
(I_t,L^{r^\prime})}\nonumber\\\leq & C
\|\tilde{u}_t-u_t\|_{L^{q^\prime}(I_t,L^2)}+C\|(\tilde{u}-u)(|\tilde{u}|^{2\sigma-1}
+|u|^{2\sigma-1})\bar{u}_t\|_{L^{q^\prime}
(I_t,L^{r^\prime})}\nonumber\\\leq & C
\|\tilde{u}_t-u_t\|_{L^{q^\prime}(I_t,L^2)}+\tau^{\frac{1}{q^\prime}}\|\tilde{u}-u\|_{L^\infty(I_t,H^2)},
\end{align}
where
\begin{align}\label{f7}
& \|\tilde{u}_t-u_t\|_{L^{q^\prime} (I_t,L^{2})}\nonumber\\\leq &
C\|\Delta (\tilde{u}-u)\|_{L^{q^\prime} (I_t,L^{2})}+C\|
V(\tilde{u}\tilde{\phi}-u\phi)\|_{L^{q^\prime} (I_t,L^{2})}+ C\|
|\tilde{u}|^{2\sigma}\tilde{u}-|u|^{2\sigma}u\|_{L^{q^\prime}
(I_t,L^{2})}\nonumber\\\leq & C
\tau^{\frac{1}{q^\prime}}\|\tilde{u}-u\|_{L^\infty(I_t,H^2)}+
C\|\tilde{\phi}-\phi\|_{H^1(I_t)}+\tau^{\frac{1}{q^\prime}}\|\tilde{u}-u\|_{L^\infty(I_t,L^2)}.
\end{align}
Similarly,
\begin{align}\label{f8}
\|(V(\tilde{u}\tilde{\phi}-u\phi)))_t\|_{L^{\gamma^\prime}(I_t,L^{\rho^\prime})}\leq
 & \|V(\tilde{\phi}'-\phi')\tilde{u})\|_{L^{\gamma^\prime}(I_t,L^{\rho^\prime})}+
 \|V\phi'(\tilde{u}-u)\|_{L^{\gamma^\prime}(I_t,L^{\rho^\prime})} \nonumber\\&+
 \|V(\tilde{\phi}-\phi)\tilde{u}_t)\|_{L^{\gamma^\prime}(I_t,L^{\rho^\prime})}+\|V\phi(\tilde{u}_t-u_t)\|_{L^{\gamma^\prime}(I_t,L^{\rho^\prime})}
\nonumber\\\leq
 &C \|\tilde{\phi}-\phi\|_{H^1(I_t)}+\tau^{\frac{2-\gamma ^\prime}{2\gamma ^\prime}}
 \|\phi'\|_{L^2}\|\tilde{u}-u\|_{L^\infty(I_t,H^2)}\nonumber\\&+
 \|V\|_{L^{2p}}\|\tilde{\phi}-\phi\|_{H^1(I_t)}+
 \|V\|_{L^{2p}}\|\tilde{u}_t-u_t\|_{L^{\gamma^\prime}(I_t,L^2)}.
\end{align}
Notice that the estimates \eqref{f2}-\eqref{f8} hold for $V \in
L^\infty$. Combining \eqref{f2}-\eqref{f8}, using the equivalent
norm of $H^2$, i.e., $\|\cdot\|_{H^2}=\|\cdot\|_{L^2}+\|\Delta
\cdot\|_{L^2}$, we obtain
\begin{align}\label{f9}
\|\tilde{u}-u\|_{L^\infty (I_t,H^2)}\leq
&C\|\tilde{u}(t)-u(t)\|_{H^2}+
C\tau^{\frac{1}{q^\prime}}\|\tilde{u}-u\|_{L^\infty(I_t,H^2)}+
C\tau^{\frac{1}{\gamma^\prime}}\|\tilde{u}-u\|_{L^\infty(I_t,H^2)} \nonumber\\
&+ C\|\tilde{\phi}-\phi\|_{H^1(I_t)}+\tau^{\frac{2-\gamma
^\prime}{2\gamma ^\prime}}
\|\tilde{u}-u\|_{L^\infty(I_t,H^2)}+\varepsilon
\|\tilde{u}-u\|_{L^\infty (I_t,H^2)}.
\end{align}
Therefore,
 the estimate \eqref{46} holds by choosing $\tau$
and $\varepsilon$ sufficiently small. Due to $\tilde{u}(0)=u(0)$, we
deduce from continuity argument and \eqref{46} that the mapping
$\phi \rightarrow u(\phi)$ is continuous with respect to $\phi \in
H^1(0,T)$.
\end{proof}

 We are now
in the position to show Lipschitz continuity of solution $u(\phi)$
with respect to $\phi \in H^1(0,T)$. With the estimate \eqref{46} at
hand, the proof is analogue to that of Proposition 4.5 in \cite{Sp},
so we omit it.
\begin{proposition}
Let $N\leq 3$, $V, \nabla V\in L^p +L^{\infty}$ and $V\in L^{2p}$
for some $p\geq 2$.  Assume $0<\sigma <\frac{2}{N-2}$ if $\lambda
<0$ or $0<\sigma <\frac{2}{N}$ if $\lambda >0$. Let
 $\phi \in H^1(0,T)$, and $u=u(\phi)\in
L^\infty ((0,T),H^2)$ be the solution of \eqref{1.1}. Given
$\delta_\phi \in H^1(0,T)$ with $\delta_\phi(0)=0$, for every
$\varepsilon \in [-1,1]$, let $\tilde{u}=u(\phi+\epsilon
\delta_\phi) $ be the solution of \eqref{1.1} with control
$\phi+\epsilon \delta_\phi$ and the same initial data as $u(\phi)$.
Then, there exists a constant $C>0$ such that
\[
\|\tilde{u}-u\|_{L^\infty((0,T),H^2)}\leq
C\|\tilde{\phi}-\phi\|_{H^1(0,T)}=C|\varepsilon|\|\delta_\phi\|_{H^1(0,T)}.
\]
In other words, the mapping $\phi \mapsto u(\phi)$ is Lipschitz
continuous with respect to $\phi$ for each fixed direction
$\delta_\phi$.
\end{proposition}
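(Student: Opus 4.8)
The plan is to promote the local Lipschitz bound \eqref{46} to a bound on the whole interval $[0,T]$ by iterating it across a finite partition whose mesh is independent of $\varepsilon$. The key observation is that the time step $\tau$ in \eqref{46} depends only on a uniform $H^2$-bound for the two solutions, and such a bound can be taken uniform in $\varepsilon\in[-1,1]$; once the mesh is fixed, a finite number of applications of \eqref{46} chains the estimate across $[0,T]$.

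First I would produce a common bound $M$. Since $\tilde{\phi}=\phi+\varepsilon\delta_\phi$ obeys $\|\tilde{\phi}\|_{H^1(0,T)}\leq\|\phi\|_{H^1(0,T)}+\|\delta_\phi\|_{H^1(0,T)}$ for every $\varepsilon\in[-1,1]$, Lemma 2.5 yields a constant $M$, depending on $T$, $\|u_0\|_{H^2}$, $\|\phi\|_{H^1(0,T)}$ and $\|\delta_\phi\|_{H^1(0,T)}$ but not on $\varepsilon$, such that
\[
\sup_{t\in[0,T]}\|u(t)\|_{H^2}\leq M,\qquad \sup_{t\in[0,T]}\|\tilde{u}(t)\|_{H^2}\leq M\quad\text{for all }\varepsilon\in[-1,1].
\]
Consequently the step $\tau=\tau(M)>0$ and the constant $C=C(M)$ furnished by the preceding proposition are likewise independent of $\varepsilon$.

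Next I would iterate. Pick a partition $0=t_0<t_1<\cdots<t_n=T$ with $t_{k+1}-t_k\leq\tau$ and $n\leq T/\tau+1$. On each slab $I_k=[t_k,t_{k+1}]$ the hypotheses behind \eqref{46} hold with these common constants, so
\[
\|u-\tilde{u}\|_{L^\infty(I_k,H^2)}\leq C\bigl(\|u(t_k)-\tilde{u}(t_k)\|_{H^2}+\|\phi-\tilde{\phi}\|_{L^2(I_k)}\bigr).
\]
Setting $a_k=\|u(t_k)-\tilde{u}(t_k)\|_{H^2}$, using $\|\phi-\tilde{\phi}\|_{L^2(I_k)}\leq\|\phi-\tilde{\phi}\|_{H^1(0,T)}$, and noting $a_{k+1}\leq\|u-\tilde{u}\|_{L^\infty(I_k,H^2)}$, I obtain the recursion $a_{k+1}\leq C a_k+C\|\phi-\tilde{\phi}\|_{H^1(0,T)}$, with $a_0=0$ since $u$ and $\tilde{u}$ share the initial datum $u_0$.

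Finally, solving this linear recurrence gives $a_k\leq\bigl(\sum_{j=1}^{n}C^{j}\bigr)\|\phi-\tilde{\phi}\|_{H^1(0,T)}$ for every $k\leq n$, and feeding this back into the slab estimate bounds $\|u-\tilde{u}\|_{L^\infty(I_k,H^2)}$ by the same quantity for each $k$. Taking the maximum over $k$ and recalling $\tilde{\phi}-\phi=\varepsilon\delta_\phi$ yields
\[
\|\tilde{u}-u\|_{L^\infty((0,T),H^2)}\leq C\|\phi-\tilde{\phi}\|_{H^1(0,T)}=C|\varepsilon|\,\|\delta_\phi\|_{H^1(0,T)},
\]
which is the claim. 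The only delicate point is the geometric growth of the constant in the number of steps; this is harmless precisely because $\tau$ depends solely on the uniform bound $M$ and not on the position of the slab, so $n$ is a fixed finite integer and $\sum_{j\leq n}C^{j}$ is a finite constant depending only on $T$ and $M$.
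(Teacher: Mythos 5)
Your proposal is correct and is exactly the argument the paper has in mind: the authors omit the proof, stating only that it follows from estimate \eqref{46} as in Proposition 4.5 of \cite{Sp}, and that argument is precisely your iteration of the local Lipschitz bound over a finite partition of $[0,T]$ with mesh $\tau(M)$ and starting value $a_0=0$. You also correctly flag the one point that needs care, namely that the $H^2$-bound $M$ (hence $\tau$ and $C$) can be chosen uniformly for $\varepsilon\in[-1,1]$ because $\{\phi+\varepsilon\delta_\phi\}$ is bounded in $H^1(0,T)$.
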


{\textbf {Proof of Theorem 1.2.}} In view of definition of
G\^{a}teaux derivative, let $u=u(\phi)$, $\tilde{u}=u(\tilde{\phi})$
with $\tilde{\phi}=\phi +\varepsilon\delta_\phi$, we compute
\[
\mathcal{F}(\tilde{\phi})-\mathcal{F}(\phi)=\mathcal{J}_1+\mathcal{J}_2+
\mathcal{J}_3,
\]
where
\[
\mathcal{J}_1:=\langle \tilde{u}(T),A\tilde{u}(T)\rangle^2-\langle
u(T),Au(T)\rangle^2,
\]
\[
\mathcal{J}_2:=\gamma_2\int_0^T
\big[(\tilde{\phi}'(t))^2-(\phi'(t))^2\big]dt,
\]
and
\[
\mathcal{J}_3:=\gamma_1\int_0^T
(\tilde{\phi}'(t))^2\left(\int_{\mathbb{R}^N}
V(x)|\tilde{\phi}(t,x)|^2\right)^2dt-\gamma_1\int_0^T
(\phi'(t))^2\left(\int_{\mathbb{R}^N} V(x)|\phi(t,x)|^2\right)^2dt.
\]
Because we have obtained Proposition 4.3, $\tilde{u}, u \in
L^\infty((0,T),H^2)\hookrightarrow L^\infty ((0,T)\times
\mathbb{R}^N)$, ones can prove along the lines of Theorem 4.6 in
\cite{Sp}, so we omit it.

\end{document}